\newcommand{\R}{\mathbb{R}}
\newcommand{\K}{\mathbb{K}}
\newcommand{\C}{\mathscr{C}}
\newcommand{\bs}{\boldsymbol}
\newcommand{\hsigma}{{H_0^\sigma(\Omega)}}
\newcommand{\kg}{\K_g^\sigma}
\newcommand{\eps}{\varepsilon}
\newcommand{\lraup}{\relbar\joinrel\rightharpoonup}
\newcommand{\bsl}{\bs \langle\!\!\!\bs \langle}
\newcommand{\bsr}{\bs \rangle\!\!\!\bs \rangle}
\newtheorem{theorem}{Theorem}[section]
\newtheorem{remark}[theorem]{Remark}
\newtheorem{proposition}[theorem]{Proposition}
\newtheorem{corollary}[theorem]{Corollary}
\numberwithin{equation}{section}
\begin{document}

	
\title[On a class of nonlocal problems with fractional constraint]{On a class of nonlocal problems with fractional gradient constraint}

\author[A. Azevedo]{Assis Azevedo} 
\address{CMAT and Departamento de Matem\'atica, Escola de C\^encias, Universidade do Minho, Campus de Gualtar, 4710-057 Braga, Portugal}
\email{assis@math.uminho.pt}

\author[J.F. Rodrigues]{Jos\'e Francisco Rodrigues}
\address{CMAFcIO -- Departamento de Matem\'atica, Faculdade de Ci\^encias, Universidade de Lisboa
	P-1749-016 Lisboa, Portugal}
\email{jfrodrigues@ciencias.ulisboa.pt}

\author[L. Santos]{Lisa Santos} 
\address{CMAT and Departamento de Matem\'atica, Escola de Ci\^encias, Universidade do Minho, Campus de Gualtar, 4710-057 Braga, Portugal}
\email{lisa@math.uminho.pt}
	
	
\keywords{Fractional gradient; Nonlocal variational inequalities; Gradient constraint; Nonlocal Lagrange multiplier; Elliptic quasilinear equations}
\subjclass{Primary 35R11, 35J62, 49J40; Secondary 35J86, 26A33}
	

	
\begin{abstract}
We consider a Hilbertian and a charges approach to fractional gradient constraint problems of the type $|D^\sigma u|\leq g$, involving the distributional fractional Riesz gradient $D^\sigma$, $0<\sigma <1$, extending previous results on the existence of solutions and Lagrange multipliers of these nonlocal problems.
	
We also prove their convergence as $\sigma\nearrow1$ towards their local counterparts with 
the gradient constraint $|D u|\leq g$.
\end{abstract}

\maketitle
	

\section{Introduction}

Recently the distributional partial derivatives of the Riesz potentials of order $1-\sigma$, $0<\sigma<1$,
\begin{equation*}
\big(D^\sigma u\big)_j=\frac{\partial}{\partial x_j}\left(I_{1-\sigma}u\right)=D_j\left(I_{1-\sigma}u\right),\quad  j=1,\ldots,N,
\end{equation*}
where $I_\alpha$, $0<\alpha<1$, is given by	
\begin{equation*}
I_\alpha u(x)=(I_\alpha*u)(x)=\gamma_{N,\alpha}\int_{\R^N}\frac{u(y)}{|x-y|^{d-\alpha}}\,dy,\qquad\text{with }\gamma_{N,\alpha}=\frac{\Gamma(\frac{N-\alpha}2)}{\pi^\frac{N}2\,2^\alpha\,\Gamma(\frac\alpha2)},
\end{equation*}
as shown to be a useful tool for a fractional vector calculus with the $\sigma$-gradient $D^\sigma$ and $\sigma$-divergence $D^\sigma\!\cdot\,$ (see \cite{ShiehSpector2015}, \cite{ShiehSpector2018}, \cite{ComiStefani2019}, \cite{ComiStefani2020}, \cite{Silhavy2020}). It leads to a new class of fractional partial differential equations and new problems in the calculus of variations  \cite{BellidoCuetoMora2021}. As a consequence of the approximation of the identity by the Riesz kernel as $\alpha\rightarrow 0$ (see \cite{Kurokawa1981}), the $\sigma$-gradient converges to the classical gradient $D$ as $\sigma\nearrow1$, for instance, for smooth functions $u\in \C^\infty_0(\R^N)$ (see also \cite{BellidoCuetoMora2021} and \cite{ComiStefani2019}).
Among the nice properties of $D^\sigma$, in \cite{ShiehSpector2015} it was shown, for $u\in \C^\infty_0(\R^N)$ that
\begin{align}\label{int1}
D^\sigma u & \equiv D\left(I_{1-\sigma} *u\right)=I_{1-\sigma}*D u\\
(-\Delta )^\sigma u& = -D^\sigma\,\cdot\left(D^\sigma u\right)\label{int2}
\end{align}
where $(-\Delta )^\sigma$ is the classical fractional Laplacian in $\R^N$. 

Here we are interested in complementing and extending some results of \cite{RodriguesSantos2019} on elliptic fractional equations of second $\sigma$-order, subjected to a $\sigma$-gradient constraint  
\begin{align}\label{int3}
\left|	D^\sigma u\right|\leq g\quad\text{in $\R^N$}
\end{align}
and have the distributional form
\begin{align}\label{int4}
-	D^\sigma\! \cdot\left(A D^\sigma u+ \Lambda^\sigma\right)=f_\#-D^\sigma\!\cdot \bs f.
\end{align}

We consider the homogeneous Dirichlet problem in a bounded open domain $\Omega\subset\R^N$, with Lipschitz boundary, so that the solution $u$ is to be found in the fractional Sobolev space $\hsigma$, $0<\sigma<1$, and may be extended by zero, belonging to $H^\sigma(\R^N)$. The Lipschitz boundary is sufficient for the $H^\sigma_0(\Omega)$-extension property, which is required in Section \ref{section4}. Although in Sections \ref{section2} and \ref{section3} it is not strictly necessary, we prefer to keep this assumption in order to avoid delicate issues, in particular, with the definition of the classical  space $H^\sigma_0(\Omega)$, which is the natural space to treat the Dirichlet boundary condition.

In \eqref{int4} $A$ is a coercive matrix with bounded variable coefficients (see \eqref{2.1}, \eqref{2.2}) and $f_\#$ and $\bs f$ are given functions making the right hand side an element $f'$ of a suitable dual space.

The vector field $\Lambda^\sigma$ is associated with the constraint \eqref{int3} and may have two possible expressions. As we show in Section \ref{section2}, with a Hilbertian approach, for $g\in L^2(\R^N)$, $g\geq 0$ and $f'\in H^{-\sigma}(\Omega)=\left(\hsigma\right)'$, $\Lambda^\sigma=D^\sigma\gamma$, for a unique $\gamma\in \hsigma$ and it defines an element of the subdifferential of $\kg$, the convex subset of $\hsigma$ of functions satisfying \eqref{int3}. The solution $u$ is then the unique solution to the variational inequality \eqref{2.9} in $\kg$ for the operator $-D^\sigma\cdot \left(AD^\sigma\cdot\right)-f'$.

In the second case, with a strictly positive $g\in L^\infty(\R^N)$ and $f_\#\in L^1(\Omega)$, $\bs f\in\bs L^1(\R^N)=L^1(\R^N)^N$, in Section \ref{section3}, by approximating the unique solution $u$ with a suitable quasilinear penalised Dirichlet problem, we show the existence of at least a generalised nonnegative Lagrange multiplier $\lambda^\sigma\in L^\infty(\R^N)'$, such that $\Lambda^\sigma=\lambda^\sigma D^\sigma u$ and $\lambda^\sigma\left(|D^\sigma u|-g\right)=0$ in the sense of charges, i.e. as an element of $L^\infty(\R^N)'$. 

We recall (see \cite{Yosida1980}), Example 5, Section 9, Ch. IV, for instance, that a charge or an element $\chi\in L^\infty(\mathscr O)'$, in an open set $\mathscr O\subset\R^N$, can be represented by a finitely additive measure $\chi^*$, with bounded total variation, which is also absolutely continuous with respect to the Lebesgue measure and may be given by a Radon integral
\begin{align}\label{int5}
\langle \chi,\varphi\rangle=\int_{\mathscr O}\varphi d\chi^*,\quad\forall \varphi\in L^\infty(\mathscr O).
\end{align}

As a consequence, it is easy to show the H\"older inequality for nonnegative charges $\chi\in L^\infty(\mathscr O)'$ and arbitrary functions $\varphi,\psi\in L^\infty(\mathscr O)$:
\begin{align}\label{int6}
\left|\langle \chi,\varphi\psi\rangle\right|\leq 	\langle \chi,|\varphi|^p\rangle^{\frac{1}{p}}	\ 	\langle \chi,|\psi|^{p'}\rangle^{\frac{1}{p'}},\quad p>1,\ p'=\tfrac{p}{p-1}.
\end{align}

It was proved in \cite{ShiehSpector2015} that, similarly to the classical case $\sigma=1$, the Sobolev, Trudinger and Morrey inequalities also hold for the fractional $D^\sigma$, in particular, there exists a constant $C=C(N,p,\sigma)>0$, such that, for $1<p<\infty$, $\sigma\in (0,1)$,
\begin{align}\label{int7}
\|u\|_{L^q(\R^N)}\leq C\|D^\sigma u\|_{L^p(\R^N)}, \quad u\in \C^\infty_c(\R^N)
\end{align}
where $q=\tfrac{Np}{N-\sigma p}$ if $\sigma<\tfrac{N}{p}$, $q<\infty$ if $\sigma =\tfrac{N}p$ and $q=\infty$ if $\sigma>\tfrac{N}{p}$. In addition, when $\sigma>\tfrac{N}{p}$, we may take in the left hand side of \eqref{int7} the norm of the H\"older continuous functions $\C^\beta_c(\R^N)$, $0<\beta=\sigma-\frac{N}{p}<1$.
As a consequence, we consider $\hsigma$ with the equivalent Hilbertian norm $\|D^\sigma u\|_{L^2(\R^N)}$ (see \cite{ShiehSpector2015}), which is also a consequence of the fractional  Poincar\'e inequality (see \cite{BellidoCuetoMora2021}).

We observe that our results of Sections \ref{section2} and \ref{section3} also hold in the limit local case $\sigma=1$, i.e. in $H^1_0(\Omega)$. We then show in Section \ref{section4}, where we need to work with generalised sequences or nets, that the charges approach to the constrained problem yields the convergence as $\sigma\nearrow1$, of the solution $u^\sigma$ and the generalised Lagrange multiplier $\lambda^\sigma$ to the respective solution $(u,\lambda)\in W^{1,\infty}_0(\Omega)\times L^\infty(\Omega)'$ to the classical problem for $D$. We remark that, in this case, our results are new for data in $L^1$ and the general elliptic operator $-D\cdot(A D\ )$, extending \cite{AzevedoSantos2017} where the charges approach was introduced for $-\Delta$ with $f_\#\in L^2(\Omega)$ and $\bs f=0$. For a recent survey on gradient type constrained problems see \cite{RodriguesSantosSurvey2019}.


\section{The Hilbertian approach with  $\sigma$-gradient constraint \mbox{in $L^2$}}\label{section2}

Let the not necessarily symmetric measurable matrix $A=A(x):\R^N\rightarrow\R^{N\times N}$ satisfy the coercive assumption, for some given $a_*,a^*>0$, 
\begin{equation}\label{2.1}
A(x)\bs \xi\cdot\bs \xi\geq a_*|\bs \xi|^2, \quad \text{a.e. }  x\in\R^N,\ \forall \bs \xi\in\R^N,
\end{equation}
and the boundedness conditions
\begin{equation}\label{2.2}
A(x)\bs \xi\cdot\bs \eta\leq a^*|\bs \xi|\,|\bs \eta|, \quad \text{a.e. }  x\in\R^N, \forall \bs \xi,\ \bs \eta\in\R^N.
\end{equation}

Consider
\begin{equation}\label{2.3}
f_\#\in L^{2^{\#}}(\Omega)\text{ and }	\bs f=(f_1,\ldots,f_N)\in \bs L^2(\R^N),
\end{equation}
where by the Sobolev embeddings \eqref{int7},
$2^\#=\frac{2N}{N+2\sigma}$ if $0<\sigma<\frac{N}{2}$, or $2^\#=q$ for any $q>1$ when $\sigma=\frac{1}{2}$ and $2^\#=1$ when $\frac{1}{2}<\sigma<1$, so that 
\begin{equation}\label{2.4}
\langle f',v\rangle_\sigma=\int_{\Omega} f_{\#}v+\int_{\R^N}\bs f\cdot D^\sigma v,
\end{equation}
for arbitrary $v\in H_0^\sigma(\Omega)$, defines the linear form $f'\in H^{-\sigma}(\Omega)=H_0^\sigma(\Omega)'$, $0<\sigma <1$.  We have 
\begin{equation}\label{2.5}
\exists!\phi\in \hsigma:\ \int_{\R^N} D^\sigma \phi\cdot D^\sigma v=\langle f',v\rangle_\sigma, \quad \forall v\in H_0^\sigma(\Omega).
\end{equation}
The validity of \eqref{2.5} is a consequence of the Fr\'echet-Riesz representation theorem and the choice of the left-hand side of this equality as the inner product in $H^\sigma_0(\Omega)$, as stated in the Introduction.
It follows that $\bs F=D^\sigma\phi\in\bs L^2(\Omega)$ belongs to the image of $\hsigma$ by $D^\sigma$:
\begin{equation}\label{2.6}
\Psi_\sigma=\left\{\bs G\in\bs L^2(\R^N): \bs G=D^\sigma v, v\in\hsigma\right\}=D^\sigma(\hsigma),
\end{equation}
which is a strict Hilbert subspace of $\bs L^2(\R^N)$, for the inner product 
\begin{equation*}
(\bs F,\bs G)_{\Psi_\sigma}=\int_{\R^N} D^\sigma \phi\cdot D^\sigma v,
\end{equation*}
and $\Psi_\sigma$ is isomorphic to $H^{-\sigma}(\Omega)$, by Riesz theorem \eqref{2.5}. Actually this remark extends the well-known case $\sigma=1$, when $D^1$ is the classical gradient $D$.

Consider the nonempty closed convex set
\begin{equation}\label{2.7}
\kg=\left\{v\in\hsigma:|D^\sigma v|\leq g \text{ a.e. in } \R^N\right\},
\end{equation}
where the $\sigma$-gradient threshold $g$ is such that
\begin{equation}\label{2.8}
g\in L^2(\R^N), \qquad g(x)\geq 0\quad \text{ a.e. }x\in \R^N.
\end{equation}

Under the assumption \eqref{2.1} and \eqref{2.2}, $A$ defines a continuous bounded coercive bilinear form over $\hsigma$ and, as an immediate consequence of the Stampacchia theorem (see \cite[p.~95]{Rodrigues1987}, for instance) we have the existence, uniqueness and continuous dependence of the solution $u$, with respect to the linear form \eqref{2.4}, of the following variational inequality
\begin{equation}\label{2.9}
u\in\kg:\int_{\R^N} AD^\sigma u\cdot D^\sigma (v-u)
\geq \int_\Omega f_\#(v-u)+\int_{\R^N}\bs f\cdot D^\sigma(v-u),\ \forall v\in \kg.	
\end{equation}

In particular, if $C_*$ denotes the Sobolev constant, with $L^{2^{*}}(\Omega)=L^{2^{\#}}(\Omega)'$,
\begin{equation*}
\|v\|_{L^{2^*}(\Omega)}\leq C_*\|D^\sigma v\|_{\bs L^2(\R^N)},\quad v\in \hsigma,\quad 0<\sigma\leq 1,
\end{equation*}
and $\widehat{u}$ is the solution corresponding to the data $\widehat{f}_\#, \widehat{\bs f}$, we have
\begin{equation}\label{2.10}
\|u-\widehat{u}\|_{\hsigma}\leq \tfrac{C_*}{a_*}\|f_\#-\widehat{f}_\#\|_{L^{2^{\#}}(\Omega)}+\tfrac{1}{a_*}\|\bs f-\widehat{\bs f}\|_{\bs L^2(\R^N)}.
\end{equation}

It is well-known (see \cite[p.~203]{Lions1969}, for instance) that to solve \eqref{2.9} is equivalent to find $u\in\hsigma$, such that
\begin{equation}\label{2.11}
\Gamma\equiv f'-\mathcal{L}^\sigma_A u\in \partial I_{\kg}(u) \quad \text{in } H^{-\sigma}(\Omega),
\end{equation}
where $\mathcal{L}^\sigma_A:\hsigma \rightarrow H^{-\sigma}(\Omega)$ is the linear continuous operator defined by
\begin{equation*}
\langle \mathcal{L}^\sigma_A w,v\rangle_\sigma=\int_{\R^N}AD^\sigma w\cdot D^\sigma v, \quad \forall v,w\in\hsigma
\end{equation*}
and $\Gamma=\Gamma(u)\in H^{-\sigma}(\Omega)$ is an element of the sub-gradient of the indicatrix function	$I_{\kg}$ of the convex set $\kg$ at $u$:
\begin{equation*}
I_{\kg}(v)=\left\{
\begin{array}{rl}
0 & \text{if }v\in\kg\\
+\infty & \text{if }v\in\hsigma\setminus\kg.
\end{array}\right.
\end{equation*}

By Riesz theorem, there exists a unique $\gamma=\gamma(u)\in \hsigma$ corresponding to $\Gamma=\Gamma(u)$ given by \eqref{2.11} (recall \eqref{2.5}) and the couple $(u,\gamma)\in\kg\times\hsigma$ solves the problem 
\begin{equation}\label{2.12}
\int_{\R^N}(AD^\sigma u+D^\sigma \gamma)\cdot D^\sigma v=\int_\Omega f_\#v+\int_{\R^N}\bs f\cdot D^\sigma v,\quad \forall v\in\hsigma.
\end{equation}

If we denote by $\widehat{\gamma}=\gamma(\widehat{u})$, with $\widehat{u}$ solving \eqref{2.9} with $\widehat{f}_\#$ and $\widehat{\bs f}$ given in \eqref{2.3}, using \eqref{2.10} and \eqref{2.2} we easily obtain, by the Riesz isometry $\|\Gamma\|_{H^{-\sigma}(\Omega)}=\|\gamma\|_{H^\sigma_0(\Omega)}$,
\begin{equation}\label{2.13}
\|\gamma-\widehat{\gamma}\|_{\hsigma}\leq C_*\left(1+\tfrac{a^*}{a_*}\right)\|f_\#-\widehat{f}_\#\|_{L^{2^{\#}}(\Omega)}+\left(1+\tfrac{a^*}{a_*}\right)\|\bs f-\widehat{\bs f}\|_{\bs L^2(\R^N)}.
\end{equation}

We have then proven the following result.
\begin{theorem}\label{t2.1}
Under the previous assumptions, namely \eqref{2.1}, \eqref{2.2}, \eqref{2.3} and \eqref{2.8}, there exists a unique solution of \eqref{2.9}, which also satisfies \eqref{2.12} with a unique $\gamma=\gamma(u)\in \hsigma$, obtained through \eqref{2.11} and depending on the data through \eqref{2.13}.
\end{theorem}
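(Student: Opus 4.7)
The plan is to assemble the argument from the three ingredients laid out in the paragraphs preceding the statement: (i) Stampacchia's theorem for the variational inequality \eqref{2.9}, (ii) the characterisation of the minimiser as a subdifferential inclusion \eqref{2.11}, and (iii) the Riesz isomorphism between $H^{-\sigma}(\Omega)$ and $\hsigma$ built on the inner product $(\phi,v)\mapsto\int_{\R^N} D^\sigma \phi\cdot D^\sigma v$ justified by the fractional Poincar\'e / Sobolev inequality \eqref{int7}.

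First I would verify the hypotheses of Stampacchia's theorem. The bilinear form $a(w,v)=\int_{\R^N} AD^\sigma w\cdot D^\sigma v$ is continuous on $\hsigma\times\hsigma$ by \eqref{2.2} and coercive by \eqref{2.1}, with ellipticity constant $a_*$ in the equivalent norm $\|D^\sigma\cdot\|_{\bs L^2(\R^N)}$. The set $\kg$ is nonempty (it contains $0$ since $g\geq0$), convex, and closed in $\hsigma$ because convergence in $\hsigma$ entails $\bs L^2$-convergence of the $\sigma$-gradients along a subsequence a.e.\ Continuity of the right-hand side linear form $f'$ on $\hsigma$ follows from \eqref{2.3} via the Sobolev embedding $\hsigma\hookrightarrow L^{2^*}(\Omega)$. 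Stampacchia's theorem then yields the unique $u\in\kg$ solving \eqref{2.9}, and the continuous dependence estimate \eqref{2.10} follows by testing the inequality for $u$ with $\widehat u$ and vice versa, adding the two inequalities, invoking coercivity \eqref{2.1}, and bounding the resulting linear terms by Cauchy--Schwarz, Sobolev embedding (for $f_\#-\widehat f_\#$), and $\bs L^2$-duality (for $\bs f-\widehat{\bs f}$).

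Next I would derive the equivalent formulation \eqref{2.11}: by definition of the subdifferential of the indicatrix $I_{\kg}$, the inclusion $\Gamma=f'-\mathcal{L}_A^\sigma u\in\partial I_{\kg}(u)$ is exactly the statement that $\langle \Gamma,v-u\rangle_\sigma\le 0$ for all $v\in\kg$, which is \eqref{2.9} rewritten. Hence $\Gamma\in H^{-\sigma}(\Omega)$ is uniquely determined by $u$. Applying the Riesz representation theorem to $\Gamma$ with the inner product specified in \eqref{2.5} gives a unique $\gamma\in\hsigma$ with $\langle \Gamma,v\rangle_\sigma=\int_{\R^N}D^\sigma\gamma\cdot D^\sigma v$ for every $v\in\hsigma$; substituting the definition of $\Gamma$ produces exactly \eqref{2.12}.

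Finally, for the stability estimate \eqref{2.13}, I would use the Riesz isometry $\|\Gamma-\widehat\Gamma\|_{H^{-\sigma}(\Omega)}=\|\gamma-\widehat\gamma\|_{\hsigma}$ and bound the former via the triangle inequality applied to
\[
\Gamma-\widehat\Gamma=(f'-\widehat f\,')-\mathcal{L}_A^\sigma(u-\widehat u).
\]
The first summand is controlled by the $H^{-\sigma}$-norm of $f'-\widehat f\,'$, which is itself bounded by $C_*\|f_\#-\widehat f_\#\|_{L^{2^\#}(\Omega)}+\|\bs f-\widehat{\bs f}\|_{\bs L^2(\R^N)}$ via \eqref{2.4} and the Sobolev inequality. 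The second summand is bounded by $a^*\|u-\widehat u\|_{\hsigma}$ thanks to \eqref{2.2}, which after inserting \eqref{2.10} yields the factor $\tfrac{a^*}{a_*}$ and combines with the first estimate to give \eqref{2.13}.

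There is no genuine obstacle here; every piece is a textbook application of Stampacchia and Riesz. The only point requiring a bit of care is making sure that the Hilbert structure used to identify $\Gamma$ with $\gamma$ is the same one, namely the $D^\sigma$-inner product, so that the relation $\Gamma=D^\sigma\!\cdot D^\sigma\gamma$ in the distributional sense is really the content of \eqref{2.12}, and that the isometry invoked in the stability bound is consistent with that choice.
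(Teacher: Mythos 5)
Your proposal follows the paper's own argument step for step: Stampacchia's theorem for existence, uniqueness and the continuous dependence \eqref{2.10}, the standard equivalence of \eqref{2.9} with the subdifferential inclusion \eqref{2.11}, the Riesz representation via the $D^\sigma$-inner product of \eqref{2.5} to produce the unique $\gamma$ and the identity \eqref{2.12}, and the Riesz isometry combined with \eqref{2.2} and \eqref{2.10} to obtain \eqref{2.13}. This is exactly how the paper proves Theorem \ref{t2.1}, so nothing further is needed.
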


\begin{remark}
This result extends to the Riesz fractional gradient the limit case $\sigma=1$, where the classical  gradient of $u$ and of $\gamma$ are extended by zero in $\R^N\setminus\Omega$. A natural and important question is to find a more direct relation of the potential $\gamma$ with the solution $u$ through the existence of a Lagrange multiplier $\lambda$, such that
\begin{equation}\label{2.14}
D^\sigma \gamma=\lambda D^\sigma u.
\end{equation}
\end{remark}

In the classical case $\sigma=1$, with $A=Id$, $\Omega\subseteq\R^2$ simply connected, and $f'$ and $g$ given by positive constants, corresponding to the elasto-plastic torsion problem, Brézis has proven the existence and uniqueness of a bounded function

\begin{equation*}
\lambda\geq 0\quad \text{ such that }\quad \lambda\left(|Du|-g\right)=0\quad \text{ a.e. in } \Omega,
\end{equation*}
which is even continuous if $\Omega$ is convex (see \cite{RodriguesSantosSurvey2019} for references). Although \eqref{2.14} is an open question in the general case of Theorem \ref{t2.1}, for strictly positive bounded threshold $g$, it has been shown to hold in the sense of finite additive measures in \cite{RodriguesSantos2019}, following the case $\sigma=1$ of \cite{AzevedoSantos2017}. 

Using a variant of a classical penalisation method proposed in \cite[p.~376]{Lions1969} with $\eps\in(0,1)$ and 
\begin{equation}\label{2.16}
k_\eps(t)=0, \ t\leq 0, \quad k_\eps(t)=\tfrac{t}{\eps}, \ 0\leq t\leq\tfrac{1}{\eps},\quad k_\eps(t)=\tfrac{1}{\eps^2},\ t\geq\tfrac{1}{\eps},
\end{equation}
we may consider the approximating quasi-linear problem: find $u_\eps\in\hsigma$, such that 
\begin{equation}\label{2.17}
\int_{\R^N} \left(AD^\sigma u_\eps+\widehat{\kappa}_\eps(u_\eps)\,D^\sigma u_\eps\right)\cdot D^\sigma v
= \int_\Omega f_\#v+\int_{\R^N}\bs f\cdot D^\sigma v,\ \forall v\in \hsigma,	
\end{equation}
where we set 
\begin{equation*}
\widehat{\kappa}_\eps=\widehat{\kappa}_\eps(u_\eps)=k_\eps\left(|D^\sigma u_\eps|^2-g^2\right)\quad \text{  with $k_\eps$ given by \eqref{2.16}}.
\end{equation*}

In the proof of the approximation theorem we shall require the assumption: for each $R>0$ there exists a $g_R$, such that 
\begin{equation}\label{2.18}
g(x)\geq g_R>0, \quad\text{ for a.e. } x\in B_R=\{x\in\R^N:|x|<R\}.
\end{equation}

\begin{theorem}\label{t2.2}
Under the assumptions of Theorem \ref{t2.1}, let also \eqref{2.18} hold. Then the unique solution $u_\eps\in\hsigma$ of \eqref{2.17}, as $\eps\rightarrow 0$ is such that
\begin{align}\label{2.19}
u_\eps\underset{\eps\rightarrow 0}{\lraup}u & \quad\text{ in $\hsigma$-weak}\\
\widehat{\kappa}_\eps D^\sigma u_\eps\underset{\eps\rightarrow 0}{\lraup}D^\sigma \gamma & \quad \text{ in $\Psi'_\sigma$-weak}\label{2.20}
\end{align}
where $(u,\gamma)\in\kg\times\hsigma$ is the unique couple given in Theorem \ref{t2.1} and satisfying \eqref{2.12} and $\Psi_\sigma$ is the vector space defined in \eqref{2.6}.
\end{theorem}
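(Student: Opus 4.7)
The plan follows the classical four-step template for penalised variational inequalities: (i) uniform bounds, (ii) admissibility of the weak limit, (iii) passage to the limit in the inequality, and (iv) identification of $\gamma$. Testing \eqref{2.17} with $v=u_\eps$, using coercivity \eqref{2.1} together with $\widehat\kappa_\eps\ge0$, and estimating the right-hand side by \eqref{2.4} and the Sobolev embedding \eqref{int7}, I obtain
\[
\|u_\eps\|_{\hsigma}^2+\int_{\R^N}\widehat\kappa_\eps|D^\sigma u_\eps|^2\le C
\]
with $C$ independent of $\eps$. Along a subsequence then $u_\eps\lraup u$ in $\hsigma$.

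The main technical point is proving that the weak limit belongs to $\kg$, and this is the only place where the local positivity \eqref{2.18} of $g$ is used in an essential way. I would exploit the three-branch structure of $k_\eps$ in \eqref{2.16}: on the ``middle'' set $\{0\le|D^\sigma u_\eps|^2-g^2\le 1/\eps\}$ one has the identity $(|D^\sigma u_\eps|^2-g^2)^+=\eps\widehat\kappa_\eps$, while on the ``upper'' set $\{|D^\sigma u_\eps|^2-g^2>1/\eps\}$ one has $\eps^2\widehat\kappa_\eps\equiv 1$, so $(|D^\sigma u_\eps|^2-g^2)^+\le \eps^2\widehat\kappa_\eps|D^\sigma u_\eps|^2$. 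Since $|D^\sigma u_\eps|^2\ge g^2\ge g_R^2$ on $\{\widehat\kappa_\eps>0\}\cap B_R$, the a priori estimate yields $\int_{B_R}\widehat\kappa_\eps\le C/g_R^2$, whence $\int_{B_R}(|D^\sigma u_\eps|^2-g^2)^+\le C(\eps/g_R^2+\eps^2)\to 0$. Weak lower semicontinuity of the convex continuous functional $\bs w\mapsto \int_{B_R}(|\bs w|^2-g^2)^+$ on $\bs L^2(B_R)$ then forces $|D^\sigma u|\le g$ a.e.\ on every $B_R$, i.e.\ $u\in\kg$.

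For the variational inequality I would test \eqref{2.17} with $v-u_\eps$ for an arbitrary $v\in\kg$ and use the pointwise bound
\[
\widehat\kappa_\eps D^\sigma u_\eps\cdot D^\sigma(v-u_\eps)\le \widehat\kappa_\eps\bigl(|D^\sigma u_\eps||D^\sigma v|-|D^\sigma u_\eps|^2\bigr)\le 0,
\]
which is valid because $|D^\sigma v|\le g\le |D^\sigma u_\eps|$ on $\{\widehat\kappa_\eps>0\}$. This gives $\int_{\R^N} AD^\sigma u_\eps\cdot D^\sigma(v-u_\eps)\ge\langle f',v-u_\eps\rangle_\sigma$. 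Taking $\limsup$ as $\eps\to 0$: the linear terms in $v$ and the right-hand side pass by weak convergence, and since $\int AD^\sigma u_\eps\cdot D^\sigma u_\eps=\int A_s D^\sigma u_\eps\cdot D^\sigma u_\eps$ with the symmetrisation $A_s:=\tfrac12(A+A^T)$ still coercive, this quadratic term is weakly lower semicontinuous. Therefore \eqref{2.9} holds for the limit $u$, which coincides with the unique solution of Theorem \ref{t2.1}; by uniqueness, \eqref{2.19} holds for the whole net $\eps\to 0$.

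Finally, for the multiplier, passing to the limit in \eqref{2.17} with any $v\in\hsigma$ gives
\[
\int_{\R^N}\widehat\kappa_\eps D^\sigma u_\eps\cdot D^\sigma v=\langle f',v\rangle_\sigma-\int_{\R^N} AD^\sigma u_\eps\cdot D^\sigma v\longrightarrow\langle f',v\rangle_\sigma-\int_{\R^N} AD^\sigma u\cdot D^\sigma v=\int_{\R^N} D^\sigma\gamma\cdot D^\sigma v,
\]
the last identity by \eqref{2.12}. Since $\widehat\kappa_\eps D^\sigma u_\eps$ is uniformly bounded as a functional of $D^\sigma v\in\Psi_\sigma$, this is precisely the weak convergence \eqref{2.20}. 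The potential difficulty I foresee beyond Step 2 is only the mild one that $A$ need not be symmetric in Step 3, which is circumvented by symmetrisation.
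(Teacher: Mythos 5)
Your proposal is correct and follows the paper's overall scheme (uniform bounds from testing with $v=u_\eps$, admissibility of the limit, passage to the limit in the inequality via monotonicity of the penalty term and weak lower semicontinuity of the $A$-form, and identification of $\Lambda=D^\sigma\gamma$ through \eqref{2.12} and the uniqueness of $\gamma$), but the key admissibility step $u\in\kg$ is handled by a genuinely different decomposition. The paper splits $B_R$ into the sets $U_{\eps,R}=\{0\le|D^\sigma u_\eps|^2-g^2\le\sqrt\eps\}$ and $V_{\eps,R}=\{|D^\sigma u_\eps|^2-g^2>\sqrt\eps\}$, estimates $\int_{B_R}(|D^\sigma u_\eps|-g)^+$ by Cauchy--Schwarz on $V_{\eps,R}$ together with the measure bound $|V_{\eps,R}|\le\sqrt\eps\int_{B_R}\widehat\kappa_\eps$, and then invokes lower semicontinuity of $\bs w\mapsto\int_{B_R}(|\bs w|-g)^+$. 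You instead split according to the two nontrivial branches of $k_\eps$, use the exact identities $(|D^\sigma u_\eps|^2-g^2)^+=\eps\widehat\kappa_\eps$ on the middle branch and $\eps^2\widehat\kappa_\eps=1$ on the upper branch, and bound $\int_{B_R}(|D^\sigma u_\eps|^2-g^2)^+\le C(\eps/g_R^2+\eps^2)$ directly from the same a priori estimate \eqref{2.24}, concluding by weak lower semicontinuity of the convex functional $\bs w\mapsto\int_{B_R}(|\bs w|^2-g^2)^+$ on $\bs L^2(B_R)$. Both arguments use \eqref{2.18} and \eqref{2.24} in the same way; yours avoids the artificial $\sqrt\eps$ threshold and the measure estimate of $V_{\eps,R}$, and gives an explicit rate for the penalised quantity, while the paper's version works at the level of $|D^\sigma u_\eps|-g$, which is the form most directly tied to the constraint. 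Your additional remarks (symmetrisation of $A$ for the lower semicontinuity \eqref{2.25}, and convergence of the whole net by uniqueness of the limit pair) are correct refinements of points the paper leaves implicit.
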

\begin{proof}
Since the quasi-linear operator $\widehat{A}_\eps:\hsigma\rightarrow H^{-\sigma}(\Omega)$ defined by the left hand side of \eqref{2.17} is bounded, strongly monotone, coercive and hemicontinuous, the existence and uniqueness of $u_\eps$ solution to \eqref{2.17} is classical (see \cite{Lions1969}, for instance).
	
Taking $v=u_\eps$ in \eqref{2.17} and recalling that $\widehat{\kappa}_\eps(u_\eps)\geq 0$, it is clear that we have, with $C_\sigma>0$ independent of $\eps$, $0<\eps<1$: 
\begin{equation}\label{2.21}
\|u_\eps\|_{\hsigma}\leq \tfrac{C_*}{a_*}\|f_\#\|_{L^{2^\#}(\Omega)}+\tfrac{1}{a_*}\|\bs f\|_{\bs L^2(\R^N)}\equiv C_\sigma,
\end{equation}
so that we have \eqref{2.19} at least for a generalised subsequence and some $u\in \hsigma$. Consequently, from \eqref{2.17} we also obtain
\begin{equation*}
\|\widehat{\kappa}_\eps D^\sigma u_\eps\|_{\Psi'_\sigma}=
\sup_{\scriptstyle v\in\hsigma\atop \|v\|_\hsigma = 1}\int_{\R^N}\widehat{\kappa}_\eps(u_\eps) D^\sigma u_\eps\cdot D v\leq\left(a_*+a^*\right)C_\sigma,
\end{equation*}
for all $\eps$, $0<\eps<1$, by using \eqref{2.21} and recalling \eqref{2.2}. Here we use the definition \eqref{2.5} and we consider $\bs L^2(\R^N)$, identified to its dual, as a subspace of $\Psi'_\sigma$, the dual of $\Psi_\sigma\subseteq \bs L^2(\R^N)$. Hence, for a generalised subsequence $\eps\rightarrow 0$, we also have 
\begin{equation}\label{2.23}
\widehat{\kappa}_\eps D^\sigma u_\eps \underset{\eps\rightarrow 0}{\lraup}\Lambda\quad \text{ in $\Psi'_\sigma$-weak}.
\end{equation}
	
In order to prove that $u\in\kg$, i.e. $|D^\sigma u|\leq g$ a.e. in $\R^N$, we consider, for $R>0$
\begin{align*}
U_{\eps,R}&=\left\{x\in B_R:0\leq |D^\sigma u_\eps(x)|^2-g^2(x)\leq\sqrt{\eps}\right\} \quad { and }\\ \quad V_{\eps,R}&=\left\{x\in B_R: |D^\sigma u_\eps(x)|^2-g^2(x)>\sqrt{\eps}\right\}
\end{align*}
and, we observe that, using the assumption \eqref{2.18}, \eqref{2.21} and $\widehat\kappa_\eps(|D^\sigma u^\eps|^2-g^2)\ge0$, from \eqref{2.17} it follows
\begin{equation}\label{2.24}
g^2_R\int_{B_R}	\widehat{\kappa}_\eps\leq \int_{\R^N}	\widehat{\kappa}_\eps g^2\leq \int_{\R^N}	\widehat{\kappa}_\eps |D^\sigma u_\eps|^2\leq \tfrac{a_*}2 C_\sigma^2,\quad 0<\eps<1.
\end{equation}
	
Consequently, for all $R>0$, we conclude that $|D^\sigma u|\leq g$ in $B_R$ from 
\begin{align*}
\int_{B_R}\left(|D^\sigma u |-g\right)^+&\le\varliminf_{\eps\rightarrow 0}	\int_{B_R}\left(|D^\sigma   u_\eps|-g\right)^+\\
&=\varliminf_{\eps\rightarrow 0}\left[	\int_{U_{\eps,R}}\left(|D^\sigma   u_\eps|-g\right) +\int_{V_{\eps,R}}\left(|D^\sigma   u_\eps|-g\right)\right]
\end{align*}
since
\begin{equation*}
\int_{U_{\eps,R}}\left(|D^\sigma   u_\eps|-g\right) \leq\frac{1}{g_R}	\int_{U_{\eps,R}}\left(|D^\sigma   u_\eps|^2-g^2\right) \leq \frac{|B_R|\sqrt{\eps}}{g_R}	
\end{equation*}
and
\begin{equation*}
\int_{V_{\eps,R}}\left(|D^\sigma   u_\eps|-g\right)\leq|V_{\eps,R}|^{\frac{1}{2}}\left(\|D^\sigma u_\eps\|_{\bs L^2(B_R) }+\|g\|_{\bs L^2(B_R)} \right)
\leq \left(C_\sigma+   \|g\|_{\bs L^2(\R^N)} \right)|V_{\eps,R}|^{\frac{1}{2}}
\end{equation*}
with
\begin{equation*}
|V_{\eps,R}|=\int_{V_{\eps,R}}1\leq \int_{V_{\eps,R}} \frac{\widehat{\kappa}_\eps}{k_\eps(\sqrt{\eps})}\leq \sqrt{\eps}\int_{B_R} \widehat{\kappa}_\eps\leq \frac{a_*C_\sigma^2}{2g_R^2}\sqrt{\eps}.
\end{equation*}
	
Now, observing that for arbitrary $v\in\kg$ we have
\begin{equation*}
\int_{\R^N}\widehat{\kappa}_\eps D^\sigma u_\eps \cdot D^\sigma (v-u_\eps)\leq \int_{\R^N}\widehat{\kappa}_\eps |D^\sigma u_\eps|\left(|D^\sigma v|-|D^\sigma u_\eps|\right)\leq 0
\end{equation*}
(since $\widehat{\kappa}_\eps>0$ if $|D^\sigma u_\eps|>g\geq |D^\sigma v|$), from \eqref{2.17} we obtain
\begin{equation*}
\int_{\R^N} AD^\sigma u_\eps\cdot D^\sigma (v-u_\eps)\geq \int_\Omega f_\#(v-u_\eps)+\int_{\R^N}\bs f\cdot D^\sigma (v-u_\eps),\ \forall v\in \kg,	
\end{equation*}
and, passing to the limit as $\eps\rightarrow 0$, we conclude that $u$ solves \eqref{2.9}, by using \eqref{2.19} and the lower  semi-continuity
\begin{equation}\label{2.25}
\varliminf_{\eps\rightarrow 0}	\int_{\R^N} AD^\sigma u_\eps\cdot D^\sigma u_\eps\geq \int_{\R^N} AD^\sigma u\cdot D^\sigma u.	
\end{equation}
	
Finally, taking an arbitrary $\bs G=D^\sigma v\in \Psi_\sigma$ and taking $\eps\rightarrow 0$ in \eqref{2.17}, recalling \eqref{2.23}, \eqref{2.12} and \eqref{2.5} we find
\begin{equation*}
\langle \Lambda,\bs G\rangle_{\Psi_\sigma}= \lim_{\eps\rightarrow 0}\int_{\R^N}\widehat{\kappa}_\eps D^\sigma u_\eps\cdot D^\sigma v=
\int_{\R^N}\left(D^\sigma \phi-AD^\sigma u\right)\cdot D^\sigma v =
\int_{\R^N}D^\sigma \gamma\cdot D^\sigma v,
\end{equation*}
yielding the conclusion \eqref{2.20}, by the uniqueness of $u$ and $\gamma$.
\end{proof}

\section{The charges approach with a $\sigma$-gradient constraint \mbox{in $L^\infty$}}\label{section3}

In the framework of the previous section, we consider now the convex set $\kg$ defined by \eqref{2.7} with the assumption
\begin{equation}\label{3.1}
g\in L^\infty(\R^N), \quad 0<g_*\leq g(x)\leq g^*\quad  \text{ a.e. $x$ in } \R^N,
\end{equation}
for some constants $g_*$ and $g^*$. It is clear that $\kg$ is still closed for the topology of $\hsigma$ in the space 
\begin{equation}\label{3.2}
\Upsilon_{\infty}^\sigma(\Omega)=\left\{v\in \hsigma:D^\sigma v\in \bs L^\infty(\R^N)\right\}, \quad 0<\sigma\leq 1,
\end{equation}
and therefore, by the fractional Morrey-Sobolev inequality \eqref{int7} for $\sigma>\frac{N}p$, we have, for all $0<\beta<\sigma$,
\begin{equation}\label{3.3}
\kg\subset 	\Upsilon_{\infty}^\sigma(\Omega)\subset \mathscr{C}^{0,\beta}(\overline\Omega)\subset L^\infty(\Omega).
\end{equation}

Here $ \mathscr{C}^{0,\beta}(\overline\Omega)$ is the space of H\"older continuous functions with exponent $\beta$. As observed in \cite{RodriguesSantos2019},  \eqref{3.3} is a consequence of Theorem 7.63 of \cite{Adams1975} (see also \cite[Th. 2.2]{ShiehSpector2015}), which yields
\begin{equation}\label{3.4}
\|u\|_{L^\infty(\Omega)}\leq C_p\|D^\sigma u\|_{\bs L^p(\R^N)}
\leq C_p\|D^\sigma u\|_{\bs L^\infty(\R^N)}^{1-\frac{2}{p}}
\|D^\sigma u\|_{\bs L^2(\R^N)}^{\frac{2}{p}},\quad \forall u\in \Upsilon_{\infty}^\sigma(\Omega),
\end{equation}
where $C_p>0$ is the Sobolev constant corresponding to any $p> \frac{N}{\sigma}\vee 2$.

Therefore, in this case, we can extend the result of the solvability of the variational inequality \eqref{2.9} with data in $L^1$:
\begin{equation}\label{3.5}
f_\#\in L^1(\Omega)\quad \text{ and } \quad \bs f\in\bs L^1(\R^N).
\end{equation}

\begin{theorem}\label{t3.1}
Under the assumptions \eqref{2.1}, \eqref{2.2}, \eqref{2.3} and \eqref{3.1} the unique solution $u$ to \eqref{2.9} also satisfies the continuous dependence estimates \eqref{2.10}. Moreover, if in addition $(\bs f,f_\#)$ and $(\widehat{\bs f},\widehat{f}_\#)$ also satisfy \eqref{3.5}, the following estimate holds
\begin{equation}\label{3.6}
\|u-\widehat{u}\|_{\hsigma}\leq a_p\|f_\#-\widehat{f}_\#\|_{L^1(\Omega)}^{\frac{1}{2-\frac{2}{p}}}+b_1\|\bs f-\widehat{\bs f}\|_{\bs L^1(\R^N)}^{\frac{1}{2}}.
\end{equation}
where $p> \frac{N}{\sigma}\vee 2$ as in \eqref{3.4} and $a_p,b_1>0$ are constants.

Consequently, the variational inequality \eqref{2.9} is also uniquely solvable with the assumption \eqref{2.3} replaced by \eqref{3.5} and the estimate \eqref{3.6} still holds in this case. 
\end{theorem}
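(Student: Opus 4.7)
The plan is to proceed in three stages: first recover the estimate \eqref{2.10} under the new hypotheses on $g$, then prove the refined estimate \eqref{3.6} for data satisfying both \eqref{2.3} and \eqref{3.5}, and finally pass from \eqref{2.3} to \eqref{3.5} by an approximation argument based on \eqref{3.6}.

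For the first stage, under \eqref{3.1} the convex set $\kg$ remains closed, convex and nonempty in $\hsigma$ (it still contains $0$), so the Stampacchia theorem applies exactly as in Theorem~\ref{t2.1} and yields a unique solution of \eqref{2.9}. The estimate \eqref{2.10} then follows by the standard argument: test the VI for $u$ with $\widehat u\in \kg$ and the VI for $\widehat u$ with $u\in \kg$, add, apply coercivity \eqref{2.1} on the left, and bound the right-hand side with the Sobolev embedding $\hsigma\hookrightarrow L^{2^*}(\Omega)$.

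For the second stage, suppose both data sets satisfy \eqref{2.3} and \eqref{3.5}. Mutual testing in the two VIs, combined with coercivity, gives
$$a_*\,\|D^\sigma(u-\widehat u)\|_{\bs L^2(\R^N)}^2\le \int_\Omega (f_\#-\widehat f_\#)(u-\widehat u)+\int_{\R^N}(\bs f-\widehat{\bs f})\cdot D^\sigma(u-\widehat u).$$
Since $u,\widehat u\in\kg$, we have $\|D^\sigma(u-\widehat u)\|_{\bs L^\infty(\R^N)}\le 2g^*$, so the last integral is controlled by $2g^*\|\bs f-\widehat{\bs f}\|_{\bs L^1(\R^N)}$. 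For the other integral I pair $L^1$ with $L^\infty$ and invoke \eqref{3.4} in the interpolated form
$$\|u-\widehat u\|_{L^\infty(\Omega)}\le C_p\,(2g^*)^{1-2/p}\,\|D^\sigma(u-\widehat u)\|_{\bs L^2(\R^N)}^{2/p}.$$
Setting $X=\|D^\sigma(u-\widehat u)\|_{\bs L^2(\R^N)}$ and writing $r,s$ for the two $L^1$-norms of the data differences, this reduces to an inequality of the shape $a_*X^2\le \alpha\, r\, X^{2/p}+\beta\, s$ with explicit $\alpha,\beta>0$. A dichotomy on which of the two right-hand terms dominates then yields \eqref{3.6} with the expected exponent $1/(2-2/p)\in(1/2,1)$ on $r$, reflecting that $p>\tfrac{N}{\sigma}\vee 2>2$.

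For the third stage, to solve \eqref{2.9} with data only in $L^1(\Omega)\times\bs L^1(\R^N)$, I approximate by truncations $f_\#^n\in L^{2^\#}(\Omega)$ and $\bs f^n\in\bs L^2(\R^N)$ converging in $L^1$ and $\bs L^1$ respectively. By Stage 1 each regularised problem admits a unique solution $u_n$, and applying \eqref{3.6} to $u_n-u_m$ shows that $\{u_n\}$ is Cauchy in $\hsigma$; it therefore converges strongly to some $u\in\kg$. Passing to the limit in the VI for $u_n$ against an arbitrary test $v\in\kg$ is routine for the bilinear and $f_\#$ terms (using $u_n\to u$ in $L^\infty(\Omega)$, again via \eqref{3.4}); for the $\bs f$ term one splits $\bs f^n=\bs f+(\bs f^n-\bs f)$, uses the uniform bound $|D^\sigma(v-u_n)|\le 2g^*$ on the correction, and invokes dominated convergence (with dominant $2g^*|\bs f|\in L^1$) on the main piece after extracting a subsequence with a.e.\ convergence of $D^\sigma u_n$. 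I expect the main obstacle to be the dichotomy step in Stage 2, since it is what forces the non-Lipschitz exponent $1/(2-2/p)$ and reveals the genuine loss of Lipschitz-type continuous dependence inherent in descending from $L^{2^\#}$ to $L^1$ data; the remaining steps are a careful but essentially routine combination of Stampacchia's theorem, the Morrey-type inequality \eqref{3.4}, and approximation.
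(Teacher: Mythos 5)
Your proposal is correct and follows essentially the same route as the paper: Stampacchia for existence, mutual testing plus coercivity, the interpolated Morrey--Sobolev bound \eqref{3.4} with $\|D^\sigma(u-\widehat u)\|_{\bs L^\infty}\le 2g^*$ to reach $a_*X^2\le \alpha r X^{2/p}+\beta s$, and then a Cauchy-sequence approximation from $L^{2^\#}\times\bs L^2$ data to $L^1$ data. The only (immaterial) difference is that you close the algebraic step by a dichotomy on which right-hand term dominates, whereas the paper uses Young's inequality together with $\sqrt{\phi+\psi}\le\sqrt\phi+\sqrt\psi$; both yield the exponent $\tfrac{1}{2-2/p}$.
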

\begin{proof}
While the first part of this theorem is also a direct consequence of Stampacchia theorem, the estimate \eqref{3.6} follows easily from \eqref{2.9}. Indeed, if we set $\overline{u}=u-\widehat{u}$, $\overline{f}_\#=f_\#-\widehat{f}_\#$ and $\overline{\bs f}=\bs f-\widehat{\bs f}$, we have 
\begin{multline}\label{3.7}
a_*\|\overline{u}\|_{\hsigma}^2=a_*\int_{\R^N}\|D^\sigma \overline{u}\|^2\\ \leq 	\|\overline{u}\|_{L^\infty(\Omega)}\|\overline{f}_\#\|_{L^1(\Omega)}+
\|D^\sigma \overline{u}\|_{\bs L^\infty(\Omega)}\|\overline{\bs f}\|_{\bs L^1(\Omega)}\\
\leq C_p\left(2g^*\right)^{1-\frac{2}{p}}
\|D^\sigma \overline{u}\|_{\bs L^2(\Omega)}^{\frac{2}{p}}\|\overline{f}_\#\|_{ L^1(\Omega)}+2g^*\|\overline{\bs f}\|_{\bs L^1(\Omega)},
\end{multline}
by \eqref{3.4} and the assumption \eqref{3.1}. Hence \eqref{3.6} follows easily by applying Young inequality and $\sqrt{\phi+\psi}\leq \sqrt{\phi}+\sqrt{\psi}$ to right hand side of \eqref{3.7} where we obtain the constant $a_p$ and $b_1$ depending on $C_p$, $a_*$, $g^*$ and $p> \frac{N}{\sigma}\vee 2$. The solvability of \eqref{2.9} under the assumption \eqref{3.5} can be easily obtained using \eqref{3.6}, approximating the solution by a Cauchy sequence in $\hsigma$ of solutions 
$u_\nu\underset{\nu\rightarrow 0}{\longrightarrow}u$, where $u_\nu$ solves \eqref{2.9} with approximating sequences
\begin{equation}\label{3.8}
{f_\#}_\nu\underset{\nu\rightarrow 0}{\longrightarrow}f_\#\quad \text{ in }\ L^1(\Omega)\quad \text{ and }\quad
\bs f _\nu\underset{\nu\rightarrow 0}{\longrightarrow}\bs f\quad \text{ in }\ L^1(\R^N) 
\end{equation}
with ${f_\#}_\nu\in L^2(\Omega)$ and $\bs f _\nu\in\bs L^2(\R^N)$, for instance, with $f_\nu=\left(f\wedge\frac{1}{\nu}\right)\vee\left(-\frac{1}{\nu}\right)$ by truncation.
\end{proof}

\begin{remark}\label{r3.2}
This result with $L^1$-data extends Theorem 2.1 of \cite{RodriguesSantos2019} which considered only the case $\bs f\equiv 0$. If the data $f_\#\in L^{2^\#}(\Omega)$ and $\bs f\in \bs L^2(\R^N)\cap \bs L^1(\R^N)$ our approximation Theorem \ref{t2.2} also holds for the solution $(u,\gamma)$ to \eqref{2.11}-\eqref{2.12} under the assumption \eqref{3.1}, which implies $g\in L^2(B_R)$ for all $R>0$, since the proof is the same.
\end{remark}
It is also possible to obtain with $L^1$-data the $\frac{1}{2}$-H\"older continuity of the map $L^\infty(\R^N)\ni g\mapsto u\in \hsigma$ with $g$ satisfying \eqref{3.1} and $u$ solution to \eqref{2.9}, extending the Theorem 2.2 of \cite{RodriguesSantos2019}.

\begin{theorem}\label{t3.2}
Under the assumptions \eqref{2.1}, \eqref{2.2} and \eqref{3.5}, let $u$ and $\widehat{u}$ be the solutions to \eqref{2.9} corresponding to $g$ and $\widehat{g}$ satisfying \eqref{3.1}. Then, there exists a constant $C_*>0$, depending on $g_*$ and the data, but independent of the solutions, such that
\begin{equation}\label{3.9}
\|u-\widehat{u}\|_{\hsigma}\leq C_*\|g-\widehat{g}\|_{L^\infty(\R^N)}^\frac{1}{2}.
\end{equation}
\end{theorem}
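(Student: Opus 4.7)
The plan is to use the classical scaling test-function trick for variational inequalities whose convex sets differ by a small pointwise perturbation: rescale each solution by a factor close to $1$ to make it admissible in the other constraint set, plug the two rescaled functions as test functions in the respective inequalities, and add them.

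Let $\delta=\|g-\widehat g\|_{L^\infty(\R^N)}$ and let $g_*>0$ denote a common lower bound for $g$ and $\widehat g$, which exists by \eqref{3.1}. It suffices to treat the case $\delta<g_*$, since when $\delta\ge g_*$ the bound \eqref{3.9} follows by adjusting $C_*$ from the uniform a priori estimate of $\|u\|_\hsigma,\|\widehat u\|_\hsigma$ obtained by testing \eqref{2.9} with $v=0$, together with \eqref{3.4} and the pointwise bound $|D^\sigma u|,|D^\sigma\widehat u|\le g^*$. Set $\alpha=\widehat\alpha=1-\delta/g_*\in(0,1)$. Then $\alpha u\in\mathbb K_{\widehat g}^\sigma$, because $|D^\sigma(\alpha u)|\le\alpha g\le g-\delta\le\widehat g$ a.e., and symmetrically $\widehat\alpha\,\widehat u\in\kg$. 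These are the test functions we need.

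Using $v=\widehat\alpha\,\widehat u$ in the variational inequality \eqref{2.9} satisfied by $u$, and $\widehat v=\alpha u$ in that satisfied by $\widehat u$, and adding the two inequalities, the left-hand sides combine into
\begin{equation*}
-\int_{\R^N}\!AD^\sigma(u-\widehat u)\cdot D^\sigma(u-\widehat u)-(1-\widehat\alpha)\!\int_{\R^N}\!AD^\sigma u\cdot D^\sigma\widehat u-(1-\alpha)\!\int_{\R^N}\!AD^\sigma\widehat u\cdot D^\sigma u,
\end{equation*}
which requires no symmetry of $A$, while the right-hand sides collapse into
\begin{equation*}
-(1-\widehat\alpha)\!\int_\Omega\! f_\#\widehat u-(1-\alpha)\!\int_\Omega\! f_\#u-(1-\widehat\alpha)\!\int_{\R^N}\!\bs f\cdot D^\sigma\widehat u-(1-\alpha)\!\int_{\R^N}\!\bs f\cdot D^\sigma u,
\end{equation*}
since $(\widehat\alpha\widehat u-u)+(\alpha u-\widehat u)=-(1-\widehat\alpha)\widehat u-(1-\alpha)u$. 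Rearranging and using the coercivity \eqref{2.1} on the left yields $a_*\|u-\widehat u\|_\hsigma^2\le \mathrm{RHS}$.

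To close the argument, the right-hand side must be controlled by $C\delta$. Each term carries a prefactor $(1-\alpha)=(1-\widehat\alpha)=\delta/g_*$, so it is enough to show the remaining integrals are uniformly bounded. The terms $\int AD^\sigma u\cdot D^\sigma\widehat u$ are bounded by $a^*\|D^\sigma u\|_{L^2}\|D^\sigma\widehat u\|_{L^2}$ via \eqref{2.2}; the terms $\int f_\# u$ are bounded by $\|f_\#\|_{L^1(\Omega)}\|u\|_{L^\infty(\Omega)}$ and the $L^\infty$-norms are controlled through \eqref{3.4} using $\|D^\sigma u\|_{L^\infty(\R^N)}\le g^*$ and the a priori bound on $\|D^\sigma u\|_{L^2}$; the terms $\int\bs f\cdot D^\sigma u$ are estimated directly by $g^*\|\bs f\|_{\bs L^1(\R^N)}$. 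Collecting everything yields $\|u-\widehat u\|_\hsigma^2\le C\delta/(a_*g_*)$, which is \eqref{3.9}.

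The only real subtlety is the construction of admissible test functions, for which the strict positivity $g_*>0$ in \eqref{3.1} is essential: without a uniform lower bound on $g,\widehat g$, the scaling $\alpha=1-\delta/g_*$ degenerates and one cannot guarantee $\alpha u\in\mathbb K_{\widehat g}^\sigma$. Everything downstream is a routine expansion plus the coercivity/boundedness dichotomy already used in Theorem \ref{t3.1}, extended with the $L^\infty$ embedding \eqref{3.4} inherent to the $L^\infty$ gradient constraint.
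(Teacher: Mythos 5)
Your proof is correct and follows essentially the same route as the paper: rescale each solution by a factor close to $1$ so that it becomes admissible for the other constraint set, test, add the two inequalities, and bound the resulting $O(\delta)$ remainder terms using \eqref{3.4} together with the pointwise bound $|D^\sigma u|\le g^*$ and the a priori $L^2$ estimate. The only cosmetic difference is the scaling factor: the paper takes $g_*/(g_*+\delta)$, which is admissible for every $\delta\ge0$ and therefore avoids the (harmless) case split you need at $\delta\ge g_*$.
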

\begin{proof}
Denote $\delta=\|g-\widehat{g}\|_{L^\infty(\R^N)}$, and take as test functions in \eqref{2.9}, respectively,
\begin{equation*}
w=\frac{g_*}{g_*+\delta}\widehat{u}\in\kg\quad \text{ and }\quad \widehat{w}=\frac{g_*}{g_*+\delta}u\in\K^\sigma_{\widehat{g}}
\end{equation*}
for the variational inequality for $u$ and for $\widehat{u}$.

Observing that 
\begin{equation*}
|u-\widehat{w}|\leq \frac{\delta}{g_*}|u|\quad\text{ and}\quad |D^\sigma(u-\widehat{w})|\leq \frac{\delta}{g_*}|D^\sigma u|
\end{equation*}
and similarly for $\widehat{u}-w$, we obtain \eqref{3.9} from 
\begin{align*}
a_*\|u-\widehat{u}\|^2_{\hsigma}&\leq \int_{\R^N} AD^\sigma (u-\widehat{u})\cdot D^\sigma(u-\widehat{u})\\
&=\int_{\R^N}AD^\sigma u\cdot D^\sigma(u- w)+\int_{\R^N} AD^\sigma u\cdot D^\sigma(w-\widehat u)\\
&\quad+\int_{\R^N}AD^\sigma\widehat u\cdot D^\sigma(\widehat u-\widehat w)+\int_{\R^N}AD^\sigma\widehat u\cdot D^\sigma(\widehat w- u)\\
&\le\int_\Omega f_\#((u-w)+(\widehat u-\widehat w))+\int_{\R^N}\bs f\cdot D^\sigma((u-w)+(\widehat u-\widehat w))\\
&\quad+\tfrac{2\delta}{g_*}\int_{\R^N}\big|AD^\sigma u\cdot D^\sigma\widehat u\big|\\
&=\int_\Omega f_\#((u-\widehat w)+(\widehat u-w))+\int_{\R^N}\bs f\cdot D^\sigma((u-\widehat w)+(\widehat u-w))\\
	&\quad+\tfrac{2\delta}{g_*}\int_{\R^N}\big|AD^\sigma u\cdot D^\sigma\widehat u\big|\\
\hspace{2cm}&\leq \tfrac{2\delta}{g_*}\left(  C_p{g^*}^{1-\frac{2}{p}}\eta_p^{\frac{2}{p}}\|f_\#\|_{L^1(\Omega)} +  g^*\|\bs f\|_{\bs L^1(\R^N)}+a^*\eta_p^2   \right),
\end{align*}
by using \eqref{3.4} and $\eta_p=a_p\|f_\#\|_{L^1(\Omega)}^{\frac{1}{2-\frac{2}{p}}}+b_1\|\bs f\|_{\bs L^2(\R^N)}^\frac{1}{2}$, which is a general upper bound for $\|D^\sigma u\|_{L^2(\R^N)}$ and $\|D^\sigma \widehat{u}\|_{L^2(\R^N)}$, just by taking $v\equiv 0$ in \eqref{2.9} and calculating as in \eqref{3.6}.
\end{proof}

\begin{remark}
This theorem allows to obtain solutions to quasi-variational inequalities of the type \eqref{2.9}, with the solution dependent on the convex sets $\K_{G[u]}^\sigma$ as in \eqref{2.7} with $g=G[u]$, where $G:L^{2^*}(\Omega)\rightarrow L^\infty_{g_*}(\R^N)$, being $L^\infty_{g_*}(\R^N)=\{h\in L^\infty(\R^N):h(x)\ge g_*>0 \text{ a.e. }x\in\R^N\}$, or $G:\C(\overline\Omega)\rightarrow L^\infty_{g_*}(\R^N)$ are continuous and bounded operators, as in Section 4 of \cite{RodriguesSantos2019}, where only the case $f_\#\in L^2(\Omega)$ and $\bs f\equiv0$ was considered. 
\end{remark}

As we observed in Remark \ref{r3.2}, the solution $u$ to the variational inequality with bounded $\sigma$-gradient constraint and data satisfying \eqref{2.3} also solves \eqref{2.12}, but the extra terms involving $\gamma$ can be interpreted with a Lagrange multiplier $\lambda$ in a generalised sense extending the Theorem 3.1 of \cite{RodriguesSantos2019} to $L^1$-data. Here we use the duality in $L^\infty(\R^N)$ and in $\bs L^\infty(\R^N)$ with the notation
\begin{equation}\label{3.10}
\bsl \lambda \bs{\alpha},\bs{\beta}\bsr=\langle \lambda,\bs{\alpha}\cdot \bs{\beta}\rangle,\quad\forall \lambda \in L^\infty(\R^N)'\ \ \forall \bs{\alpha},\bs{\beta}\in\bs L^\infty(\R^N).
\end{equation}

\begin{theorem}\label{t3.3}
Under the assumptions \eqref{2.1}, \eqref{2.2}, \eqref{3.1} and \eqref{2.3} or \eqref{3.5} there exists $(u,\lambda)\in\Upsilon_{\infty}^\sigma(\Omega)\times 	L^\infty(\R^N)'$, such that 
\begin{multline}\label{3.11}
\int_{\R^N}AD^\sigma u\cdot D^\sigma w+\bsl\lambda D^\sigma u, D^\sigma w\bsr\\
=\int_\Omega f_\#w+\int_{\R^N}\bs f\cdot D^\sigma w,\quad \forall w\in\Upsilon^\sigma_\infty(\Omega),
\end{multline}
\begin{equation}\label{3.12}
|D^\sigma u|\leq g\ \text{ a.e. in }\ \R^N,\qquad \lambda\geq 0 \ \text{ and }\ \lambda(|D^\sigma u|-g)=0\ \text{ in }\ L^\infty(\R^N)'.
\end{equation}

Moreover, $u$ is the unique solution to the variational inequality \eqref{2.9}.
\end{theorem}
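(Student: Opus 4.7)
\emph{Penalization and weak-$*$ extraction of a charge.} I plan to reuse the quasilinear penalization scheme \eqref{2.17} of Theorem~\ref{t2.2}. Under \eqref{3.1} the threshold $g$ is bounded below by $g_*>0$ \emph{globally}, so the estimate \eqref{2.24} upgrades to the uniform bound
\begin{equation*}
g_*^2\int_{\R^N}\widehat{\kappa}_\eps\leq \int_{\R^N}\widehat{\kappa}_\eps\, g^2\leq \tfrac{a_*}{2}C_\sigma^2,\qquad 0<\eps<1.
\end{equation*}
Through the isometric inclusion $L^1(\R^N)\hookrightarrow L^\infty(\R^N)'$ and Banach--Alaoglu in the weak-$*$ topology, I extract a generalised subsequence so that $\widehat{\kappa}_\eps\lraup \lambda$ in $L^\infty(\R^N)'$-weak-$*$, with $\lambda\geq0$ inherited from $\widehat{\kappa}_\eps\geq0$. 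Theorem~\ref{t2.2} already supplies $u_\eps\lraup u$ in $\hsigma$-weak with $|D^\sigma u|\leq g$ a.e., so $u\in\Upsilon_{\infty}^\sigma(\Omega)\cap\kg$ is the unique solution of \eqref{2.9} given by Theorem~\ref{t3.1}.

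\emph{Strong convergence and the Lagrange multiplier equation.} Testing \eqref{2.17} with $v=u_\eps-u\in\hsigma$, I notice that on $\{\widehat{\kappa}_\eps>0\}$ we have $|D^\sigma u_\eps|>g\geq|D^\sigma u|$, whence
\begin{equation*}
\int_{\R^N}\widehat{\kappa}_\eps D^\sigma u_\eps\cdot D^\sigma(u_\eps-u)\geq\int_{\R^N}\widehat{\kappa}_\eps |D^\sigma u_\eps|\bigl(|D^\sigma u_\eps|-|D^\sigma u|\bigr)\geq 0.
\end{equation*}
Since the right-hand side of \eqref{2.17} vanishes in the limit by weak convergence of $u_\eps$, this makes $\int_{\R^N}AD^\sigma u_\eps\cdot D^\sigma(u_\eps-u)$ asymptotically nonpositive; coercivity of $A$ then yields $\|D^\sigma(u_\eps-u)\|_{\bs L^2(\R^N)}\to 0$. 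To pass to the limit in \eqref{2.17} against an arbitrary $w\in\Upsilon_{\infty}^\sigma(\Omega)$, I split
\begin{equation*}
\int_{\R^N}\widehat{\kappa}_\eps D^\sigma u_\eps\cdot D^\sigma w=\int_{\R^N}\widehat{\kappa}_\eps D^\sigma u\cdot D^\sigma w+\int_{\R^N}\widehat{\kappa}_\eps (D^\sigma u_\eps-D^\sigma u)\cdot D^\sigma w.
\end{equation*}
Because $D^\sigma u\cdot D^\sigma w\in L^\infty(\R^N)$, the first term converges to $\langle\lambda,D^\sigma u\cdot D^\sigma w\rangle=\bsl\lambda D^\sigma u,D^\sigma w\bsr$, while the second is dispatched by a Cauchy--Schwarz argument against the nonnegative weights $\widehat{\kappa}_\eps$, combining the a priori bound $\int\widehat{\kappa}_\eps|D^\sigma u_\eps|^2\leq C$ with the strong $\bs L^2$-smallness of $D^\sigma u_\eps-D^\sigma u$.

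\emph{Complementarity, uniqueness, and $L^1$ data.} The sign $\lambda\geq0$ plus $|D^\sigma u|\leq g$ give $\langle\lambda,(g-|D^\sigma u|)\varphi\rangle\geq 0$ for $0\leq\varphi\in L^\infty(\R^N)$; conversely, the pointwise inequality $\widehat{\kappa}_\eps(|D^\sigma u_\eps|-g)\geq 0$ on $\{\widehat{\kappa}_\eps>0\}$, passed to the limit against $\varphi\in L^\infty(\R^N)$ using again the strong $\bs L^2$ convergence and the uniform $L^1$ bound on $\widehat{\kappa}_\eps$, delivers the reverse inequality. Hence $\lambda(|D^\sigma u|-g)=0$ in $L^\infty(\R^N)'$, and uniqueness of $u$ is Theorem~\ref{t3.1}. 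For data satisfying only \eqref{3.5}, I first approximate by $(f_{\# \nu},\bs f_\nu)$ as in \eqref{3.8}, so the associated solutions $u_\nu$ form a Cauchy sequence in $\hsigma$ by \eqref{3.6}; since the multipliers $\lambda_\nu$ inherit from \eqref{2.24} a uniform $L^\infty(\R^N)'$ bound (depending only on $g_*$, $a_*$ and the $L^1$-norms of the data), a further weak-$*$ extraction produces the desired pair $(u,\lambda)$. The main technical obstacle is showing that $\int_{\R^N}\widehat{\kappa}_\eps(D^\sigma u_\eps-D^\sigma u)\cdot D^\sigma w$ vanishes: as $\widehat{\kappa}_\eps$ is bounded only in $L^1$, one must exploit that it concentrates on the set where $|D^\sigma u_\eps|\approx g\approx|D^\sigma u|$ (by strong convergence) in order to absorb the $\bs L^2$-small perturbation $D^\sigma u_\eps-D^\sigma u$ against the bounded $D^\sigma w$.
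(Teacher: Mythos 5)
Your skeleton coincides with the paper's: penalisation via \eqref{2.17}, the uniform bound $g_*^2\int_{\R^N}\widehat{\kappa}_\eps\le\tfrac{a_*}{2}C_\sigma^2$ obtained from \eqref{2.24} and \eqref{3.1}, weak-$*$ extraction of a nonnegative charge $\lambda$ by Banach--Alaoglu, and the reduction of the $L^1$-data case to \eqref{2.3} via \eqref{3.8} and \eqref{3.6}. Your extra observation that testing \eqref{2.17} with $v=u_\eps-u$, using $|D^\sigma u|\le g<|D^\sigma u_\eps|$ on $\{\widehat{\kappa}_\eps>0\}$ and the coercivity \eqref{2.1}, yields \emph{strong} convergence $D^\sigma u_\eps\to D^\sigma u$ in $\bs L^2(\R^N)$ is correct, and is more than the paper establishes (it only uses weak convergence and the lower semicontinuity \eqref{2.25}).

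Nevertheless there is a genuine gap exactly at the point you yourself flag as the ``main technical obstacle'', and your proposed remedy does not close it. Strong $\bs L^2$ convergence of $D^\sigma u_\eps$ together with the $L^1$ bound on $\widehat{\kappa}_\eps$ does \emph{not} imply $\int_{\R^N}\widehat{\kappa}_\eps|D^\sigma(u_\eps-u)|^2\to0$: schematically, a weight of size $\eps^{-2}$ (the maximum allowed by \eqref{2.16}) on a set of measure $\eps^{2}$ where $|D^\sigma(u_\eps-u)|=1$ has unit $L^1$ norm, meets $\|D^\sigma(u_\eps-u)\|_{\bs L^2}^2=\eps^2\to0$, and still gives a weighted integral equal to $1$. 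Hence your weighted Cauchy--Schwarz step identifying $\lim\int\widehat{\kappa}_\eps D^\sigma u_\eps\cdot D^\sigma w$ with $\bsl\lambda D^\sigma u,D^\sigma w\bsr$, and likewise the limit passage in the complementarity condition, are unproved. The paper supplies the missing ingredient by a duality argument absent from your proposal: it extracts a second weak-$*$ limit $\widehat{\kappa}_\eps D^\sigma u_\eps\lraup\Lambda$ in $\bs L^\infty(\R^N)'$, proves $\langle\lambda,|D^\sigma u|^2\rangle\le\langle\lambda,g^2\rangle\le\varlimsup_\eps\int\widehat{\kappa}_\eps|D^\sigma u_\eps|^2\le\bsl\Lambda,D^\sigma u\bsr$ (using $\widehat{\kappa}_\eps(|D^\sigma u_\eps|^2-g^2)\ge0$ and \eqref{2.25}), gets the reverse inequality by expanding $0\le\int\widehat{\kappa}_\eps|D^\sigma(u_\eps-u)|^2$, and only then concludes \eqref{3.19}, after which the H\"older inequality for charges \eqref{int6} gives $\Lambda=\lambda D^\sigma u$. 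Your heuristic that $\widehat{\kappa}_\eps$ concentrates where $|D^\sigma u_\eps|\approx g\approx|D^\sigma u|$ is not a substitute: closeness of the moduli to $g$ does not make the vectors $D^\sigma u_\eps$ and $D^\sigma u$ close, and the assertion $|D^\sigma u|\approx g$ on that set is essentially the complementarity relation you are trying to prove. The analogous weighted convergence $\langle\lambda_\nu,|D^\sigma(u_\nu-u)|^2\rangle\to0$ (the paper's \eqref{3.27}) is also needed, and missing from your sketch, to identify the limit of $\lambda_\nu D^\sigma u_\nu$ and to recover \eqref{3.12} in the $L^1$-data step.
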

\begin{proof} {\bf i)}
First we suppose \eqref{2.3}, i.e. $f_\#\in L^2(\Omega)$ and $\bs f\in\bs L^2(\R^N)$, and from the approximation problem \eqref{2.17}, in addition to \eqref{2.21}, we obtain the {\em a priori} estimates independent of $0<\eps<1$:
\begin{align}\label{3.13}
\|\widehat{\kappa}_\eps\|_{L^1(\R^N)}\leq \tfrac{a_*}{2g_*^2}C_\sigma^2\equiv \tfrac{C_1}{g_*^2} 
\\ \label{3.14}
\|\widehat{\kappa}_\eps\|_{L^\infty(\R^N)'}\leq \tfrac{C_1}{g_*^2} 
\\ \label{3.15}
\|\widehat{\kappa}_\eps D^\sigma u_\eps\|_{L^\infty(\R^N)'}\leq \tfrac{C_1}{g_*}.
\end{align}

Indeed, \eqref{3.13} follows from \eqref{2.24} with the assumption \eqref{3.1}, which implies \eqref{3.14}, by definition of the dual norm, as well as \eqref{3.15}, by using \eqref{3.13} and again \eqref{2.24}:
\begin{equation*}
\|\widehat{\kappa}_\eps D^\sigma u_\eps\|_{L^\infty(\R^N)'}=
\sup_{\scriptstyle \bs \beta \in\bs L^\infty(\R^N)\atop \|\bs \beta\|_{\bs L^\infty(\R^N)}=1}\int_{\R^N} \widehat{\kappa}_\eps D^\sigma u_\eps\cdot \bs \beta \leq 
\left(\int_{\R^N} \widehat{\kappa}_\eps |D^\sigma u_\eps|^2\right)^{\frac{1}{2}}	\left(\int_{\R^N} \widehat{\kappa}_\eps \right)^{\frac{1}{2}}\leq \tfrac{C_1}{g_*}.		
\end{equation*}

By the estimates \eqref{3.14}, \eqref{3.15} and the Banach Alaoglu Bourbaki theorem, at least for some generalised subsequence $u_\eps\underset{\eps\rightarrow 0}{\lraup}u$ in $\hsigma$ also
\begin{equation*}
\widehat{\kappa}_\eps \underset{\eps\rightarrow 0}{\lraup}\lambda\ \text{ weakly in }\ L^\infty(\R^N)'\ \text{ and }\
\widehat{\kappa}_\eps D^\sigma u_\eps  \underset{\eps\rightarrow 0}{\lraup}\Lambda\ \text{ weakly in }\ \bs L^\infty(\R^N)'.
\end{equation*}

Since $\widehat{\kappa}_\eps\geq 0$ a.e., $\lambda\geq 0$ in $L^\infty(\R^N)'$ and letting $\eps\rightarrow 0$ in \eqref{2.17} with $w\in \Upsilon^\sigma_{\infty}(\Omega)$, $u$ and $\Lambda$ satisfy
\begin{equation}\label{3.17}
\int_{\R^N}AD^\sigma u\cdot D^\sigma w+\bsl\Lambda, D^\sigma w\bsr=\int_\Omega f_\#w+\int_{\R^N}\bs f\cdot D^\sigma w,\quad \forall w\in\Upsilon^\sigma_\infty(\Omega).
\end{equation}

Letting $\eps\rightarrow 0$ in \eqref{2.17} with $v=u_\eps$ and using \eqref{2.25} we easily find
\begin{equation*}
\varlimsup_{\eps\rightarrow 0}\int_{\R^N} \widehat{\kappa}_\eps |D^\sigma u_\eps|^2\leq \bsl \Lambda, D^\sigma u\bsr.
\end{equation*}

Recalling that $(|D^\sigma u_\eps|^2-g^2)\widehat{\kappa}_\eps\geq 0$ and $|D^\sigma u|\leq g$ a.e $x\in\R^N$, we obtain
\begin{equation*}
\langle \lambda, |D^\sigma u|^2\rangle\leq \langle \lambda,g^2\rangle=	\lim_{\eps\rightarrow 0}\int_{\R^N} \widehat{\kappa}_\eps g^2\leq 
\varlimsup_{\eps\rightarrow 0}\int_{\R^N} \widehat{\kappa}_\eps |D^\sigma u_\eps|^2\leq \bsl \Lambda, D^\sigma u\bsr.
\end{equation*}

Since we get the opposite inequality from 
\begin{multline*}
0\leq \varlimsup_{\eps\rightarrow0}\int_{\R^N} \widehat{\kappa}_\eps |D^\sigma (u_\eps-u)|^2\\ =
\varlimsup_{\eps\rightarrow 0}\int_{\R^N} \widehat{\kappa}_\eps |D^\sigma u_\eps|^2-
2 \lim_{\eps\rightarrow 0}\int_{\R^N} \widehat{\kappa}_\eps D^\sigma u_\eps\cdot D^\sigma u+
\lim_{\eps\rightarrow 0}\int_{\R^N} \widehat{\kappa}_\eps |D^\sigma u|^2\\
\leq \bsl\Lambda,D^\sigma u\bsr-2 \bsl\Lambda,D^\sigma u\bsr+\langle\lambda,|D^\sigma u|^2\rangle=
-\ \bsl\Lambda,D^\sigma u\bsr+\langle\lambda,|D^\sigma u|^2\rangle
\end{multline*}
we conclude $\bsl\Lambda,D^\sigma u\bsr=\langle\lambda,|D^\sigma u|^2\rangle$ and 
\begin{equation}\label{3.19}
\lim_{\eps\rightarrow 0}\int_{\R^N} \widehat{\kappa}_\eps |D^\sigma (u_\eps-u)|^2=0.
\end{equation}

Hence for any $\bs \beta\in \bs L^\infty(\R^N)$, we have
\begin{multline*}
\left|\bsl\Lambda-\lambda D^\sigma u,\bs\beta\bsr\right|=
\lim_{\eps\rightarrow 0}\left|\int_{\R^N} \widehat{\kappa}_\eps D^\sigma (u_\eps-u)\cdot\bs\beta\right|\\
\leq\lim_{\eps\rightarrow 0}\left[\left(\int_{\R^N} \widehat{\kappa}_\eps |D^\sigma (u_\eps-u)|^2\right)^{\frac{1}{2}}\| \widehat{\kappa}_\eps\|_{L^1(\R^N)}\,\|\bs\beta\|_{\bs L^\infty(\R^N)}\right]=0,
\end{multline*}
showing that 
\begin{equation*}
\Lambda=\lambda D^\sigma u\quad \text{ in }\ \bs L^\infty(\R^N)',
\end{equation*}
and that, in fact, \eqref{3.17} is equivalent to \eqref{3.11}.

It remains to show the last equation of \eqref{3.12} which follows easily from (recall \eqref{3.1})
\begin{multline*}
0=\langle\lambda,(g^2-|D^\sigma u|^2)\varphi\rangle=
\langle\lambda,(g-|D^\sigma u|)(g+|D^\sigma u|)\varphi\rangle\\
\geq g_*\langle\lambda,(g-|D^\sigma u|)\varphi\rangle=g_*\langle\lambda(g-|D^\sigma u|),\varphi\rangle\geq 0
\end{multline*}
for arbitrarily $\varphi\in L^\infty(\Omega)$, $\varphi\geq 0$, which holds provided we show  
\begin{equation}\label{3.22extra}
\langle\lambda,(g^2-|D^\sigma u|^2)\varphi\rangle=0.
\end{equation}

As above, using \eqref{3.19}, we have first
\begin{align*}
\langle\lambda,g^2\varphi\rangle& \leq \lim_{\eps\rightarrow 0}\int_{\R^N} \widehat{\kappa}_\eps |D^\sigma u_\eps|^2\varphi\\
&=\lim_{\eps\rightarrow 0}\left(\int_{\R^N} \widehat{\kappa}_\eps |D^\sigma (u_\eps-u)|^2\varphi+2\int_{\R^N} \widehat{\kappa}_\eps D^\sigma (u_\eps-u)\cdot D^\sigma u\, \varphi \right.\\
&\left.\qquad+\int_{\R^N} \widehat{\kappa}_\eps |D^\sigma u|^2\varphi\right)\\
&=\langle \lambda,|D^\sigma u|^2\varphi\rangle
\end{align*}
and, since $u\in\kg$ and $\varphi, \lambda\geq0$, it also holds
\begin{equation*}
\langle\lambda,(g^2-|D^\sigma u|^2)\varphi\rangle\ge0.
\end{equation*}

To show that $u$ is the unique solution to \eqref{2.9} it suffices to take $w=u-v$, with an arbitrary $v\in\kg$, and observe that, by \eqref{3.22extra},
\begin{multline*}
\bsl \lambda D^\sigma u,D^\sigma (v-u)\bsr\leq \langle\lambda, |D^\sigma u|\left(|D^\sigma v|-|D^\sigma u|\right)\rangle\\
\leq \langle\lambda,|D^\sigma u|(g-|D^\sigma u|)\rangle=\langle\lambda(g^2-|D^\sigma u|^2),\tfrac{|D^\sigma u|}{g+|D^\sigma u|}\rangle= 0.
\end{multline*}

{\bf ii)} In the second case, if \eqref{3.5} holds, we can use approximation by solutions $(u_\nu,\lambda_\nu)$ of \eqref{3.11}-\eqref{3.12} corresponding to data ${f_\#}_\nu\in L^{2^\#}(\Omega)$ and $\bs f_\nu\in\bs L^2(\R^N)$ satisfying \eqref{3.8}, as in Theorem \ref{t3.1}.

Using the estimate \eqref{3.6} it is clear that 
\begin{equation}\label{3.22}
u_\nu\underset{\nu\rightarrow 0}{\longrightarrow}u \quad\text { in }\quad \hsigma
\end{equation}
and $u$ solves \eqref{2.9}.

For $\varphi\in L^\infty(\R^N)$, setting $b=\frac{\|\varphi\|_{L^\infty(\R^N)}}{g_*^2}$,  recalling \eqref{3.1} and using \eqref{3.11} and \eqref{3.12} for $\lambda_\nu$, which also implies $\langle \lambda_\nu,g^2-|D^\sigma u_\nu|^2\rangle=0$, we have	
\begin{align}\label{3.24extra}
\nonumber|\langle \lambda_\nu,\varphi\rangle|&\leq \langle \lambda_\nu,bg^2\rangle\\
&=b\langle \lambda_\nu,|D^\sigma u_\nu|^2\rangle=b\bsl \lambda_\nu D^\sigma u_\nu,D^\sigma u_\nu\bsr\\
\nonumber&\leq b\left(\int_\Omega f_\#u_\nu+\int_{\R^N}\bs f\cdot D^\sigma u_\nu\right)\leq C\tfrac{\|\varphi\|_{L^\infty(\R^N)}}{g_*^2},
\end{align}
where the constant $C>0$ depends only on the $L^1$-norms of $f_\#$ and $\bs f$ and on the constants $a_p$ and $b_1$ of \eqref{3.6}, being consequently independent of $\nu$. Then $\lambda_\nu$ is uniformly bounded in $L^\infty(\R^N)'$ and we may assume, for some generalised subsequence,
\begin{equation}\label{3.23}
\lambda_\nu  \underset{\nu\rightarrow 0}{\lraup}\lambda\quad  \text{ in }\  L^\infty(\R^N)'-\text{weakly}^*,\quad \text{ with }\quad \lambda\geq 0,
\end{equation}
and, since $\Lambda_\nu=\lambda_\nu D^\sigma u_\nu$ is also bounded in $\bs L^\infty(\R^N)'$ (recall $\|D^\sigma u_\nu\|_{\bs L^\infty(\R^N)}\leq g^*$), also
\begin{equation}\label{3.24}
\Lambda_\nu  \underset{\nu\rightarrow 0}{\lraup}\Lambda\quad \text{ in }\  \bs L^\infty(\R^N)'-\text{weakly}^*.
\end{equation}

Therefore, taking the limit $\nu\rightarrow 0$ in \eqref{3.11}  we find that $(u,\lambda)$ solves
\begin{equation}\label{3.25}
\int_{\R^N}AD^\sigma u\cdot D^\sigma w+\bsl\Lambda, D^\sigma w\bsr=\int_\Omega f_\#w+\int_{\R^N}\bs f\cdot D^\sigma w,\quad \forall w\in\Upsilon_\infty^\sigma(\Omega).
\end{equation}

Recalling \eqref{3.22extra} with $\varphi=1$, we have
\begin{equation}\label{extra}
\langle\lambda_\nu,|D^\sigma u|^2\rangle\le\langle\lambda_\nu,g^2\rangle=\langle\lambda_\nu,|D^\sigma u_\nu|^2\rangle.
\end{equation}
%
Using the equality \eqref{extra} and \eqref{3.22}, we have
\begin{align}\label{3.27}
\nonumber0\leq \tfrac{1}{2}\langle&\lambda_\nu,|D^\sigma(u_\nu-u)|^2\rangle\\
=&\tfrac{1}{2}
\left(\langle\lambda_\nu,|D^\sigma u_\nu|^2\rangle
-2\langle\lambda_\nu,D^\sigma u_\nu\cdot D^\sigma u\rangle
+\langle\lambda_\nu,|D^\sigma u|^2\rangle
\right)\\
\nonumber\leq& \langle\lambda_\nu,|D^\sigma u_\nu|^2\rangle
-\langle\lambda_\nu,D^\sigma u_\nu\cdot D^\sigma u\rangle
=\bsl\lambda_\nu D^\sigma u_\nu,D^\sigma(u_\nu-u)\bsr\\
\nonumber=& \int_\Omega {f_\#}_\nu(u_\nu-u)+\int_{\R^N}\bs f_\nu\cdot D^\sigma(u_\nu-u)-\int_{\R^N} AD^\sigma u_\nu\cdot D^\sigma (u_\nu-u)\underset{\nu\rightarrow 0}{\longrightarrow}0,
\end{align}
being the last equality satisfied because $(u_\nu,\lambda_\nu)$ solves problem \eqref{3.11}-\eqref{3.12} with data ${f_\#}_\nu$ and $\bs f_\nu$.

Then, from \eqref{3.25} we can conclude that $u$ in fact solves \eqref{3.11} from the equality
\begin{align}\label{3.28}
\nonumber\bsl\Lambda,D^\sigma w\bsr&=\lim_{\nu\rightarrow0}\bsl\lambda_\nu D^\sigma u_\nu,D^\sigma w\bsr\\
&=\lim_{\nu\rightarrow0}\bsl\lambda_\nu D^\sigma u,D^\sigma w\bsr+\lim_{\nu\rightarrow0}\bsl\lambda_\nu D^\sigma (u_\nu-u),D^\sigma w\bsr\\
\nonumber&=\lim_{\nu\rightarrow0}\langle\lambda_\nu,D^\sigma u\cdot D^\sigma w\rangle=\langle\lambda,D^\sigma u\cdot D^\sigma w\rangle=\bsl\lambda D^\sigma u,D^\sigma w\bsr,
\end{align}
which is valid for all $w\in\Upsilon^\sigma_\infty(\Omega)$, since \eqref{3.27} implies
\begin{multline*}
\left|	\bsl\lambda_\nu D^\sigma (u_\nu-u),D^\sigma w\bsr\right|=\left|
\langle\lambda_\nu,D^\sigma (u_\nu-u)\cdot D^\sigma w\rangle\right|\\\leq\langle\lambda_\nu,|D^\sigma (u_\nu-u)|\,|D^\sigma w|\rangle\\
\leq\left(\langle \lambda_\nu,|D^\sigma (u_\nu-u)|^2\rangle\right)^\frac{1}{2}\left(\langle \lambda_\nu,|D^\sigma w|^2\rangle\right)^\frac{1}{2}\underset{\nu\rightarrow 0}{\longrightarrow} 0,
\end{multline*}
where we have used the H\"older inequality for charges  in the last inequality.

From \eqref{3.28}, we find $\bsl\Lambda, D^\sigma u\bsr=\langle \lambda,|D^\sigma u|^2\rangle$ and
\begin{multline*}
\langle\lambda,g^2\rangle=\lim_{\nu\rightarrow0}\langle\lambda_\nu,g^2\rangle=\lim_{\nu\rightarrow0}\bsl\lambda_\nu D^\sigma u_\nu,D^\sigma u_\nu\bsr\\
=\lim_{\nu\rightarrow0}\bsl\lambda_\nu D^\sigma u_\nu,D^\sigma u\bsr+\lim_{\nu\rightarrow0}\bsl\lambda_\nu D^\sigma u_\nu,D^\sigma (u_\nu-u)\bsr\\
=\lim_{\nu\rightarrow0}\bsl\Lambda_\nu,D^\sigma u\bsr=\bsl\Lambda,D^\sigma u\bsr=\langle\lambda,|D^\sigma u|^2\rangle.
\end{multline*}

Finally, we can now complete the proof of the theorem by using this equality in the form 
$\langle\lambda(g^2-|D^\sigma u|^2),1\rangle=0$ and again the H\"older inequality to conclude the third condition in \eqref{3.12} with an arbitrarily $\varphi\in L^\infty(\R^N)$, 
\begin{align*}
\big|\langle\lambda(g-|D^\sigma u|),\varphi\rangle \big|&\leq \langle\lambda(g-|D^\sigma u|),|\varphi|\rangle\\
&=\big
\langle\lambda(g^2-|D^\sigma u|^2),\tfrac{|\varphi|}{g+|D^\sigma u|}\big\rangle\\
&\leq \langle\lambda(g^2-|D^\sigma u|^2),1\rangle^\frac{1}{2}\ \big\langle\lambda(g^2-|D^\sigma u|^2),\tfrac{|\varphi|^2}{(g+|D^\sigma u|)^2}\big\rangle^\frac{1}{2}=0.
\end{align*}
\end{proof}

The second part of this proof actually shows a generalised continuous dependence of the solution and of the Lagrange multiplier with respect to the $L^1$ data.
\begin{corollary}
Under the assumptions \eqref{2.1}, \eqref{2.2}, \eqref{3.1} and \eqref{3.5}, if $(u_\nu,\lambda_\nu)\in\Upsilon^\sigma_\infty(\Omega)\times L^\infty(\R^N)'$ are the solutions to \eqref{3.11}, \eqref{3.12} corresponding to $L^1$ data satisfying \eqref{3.8}, as $\nu\rightarrow 0$, we have the convergence, for some generalised subsequence or net,
\begin{equation*}
u_\nu\underset{\nu\rightarrow 0}{\longrightarrow}u \quad\text { in }\quad \hsigma\quad \text{ and }\quad	\lambda_\nu  \underset{\nu\rightarrow 0}{\lraup}\lambda\quad \text{ in }\  L^\infty(\R^N)'-\text{weakly}^*,
\end{equation*}
where $(u,\lambda) \in\Upsilon^\sigma_\infty(\Omega)\times L^\infty(\R^N)'$ also solves \eqref{3.11}-\eqref{3.12}.
\end{corollary}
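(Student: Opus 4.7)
The plan is to observe that the entire content of this corollary is already embedded in step (ii) of the proof of Theorem \ref{t3.3}, so the proof amounts to repackaging that argument with approximating data $({f_\#}_\nu,\bs f_\nu)$ already in $L^1$ rather than in $L^2$, and reading off the relevant convergences.

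First, from Theorem \ref{t3.1}, in particular the continuous dependence estimate \eqref{3.6}, applied to differences ${f_\#}_\nu-{f_\#}_\mu$ and $\bs f_\nu-\bs f_\mu$, the sequence $(u_\nu)$ is Cauchy in $\hsigma$ and converges strongly to the unique solution $u$ of the variational inequality \eqref{2.9} associated with $(f_\#,\bs f)$. This gives the first convergence. Next, the estimate \eqref{3.24extra}, applied to $\lambda_\nu$ and using the constraint \eqref{3.12} for $(u_\nu,\lambda_\nu)$, yields a bound
\begin{equation*}
|\langle\lambda_\nu,\varphi\rangle|\leq C\,\tfrac{\|\varphi\|_{L^\infty(\R^N)}}{g_*^2},\qquad \varphi\in L^\infty(\R^N),
\end{equation*}
independent of $\nu$, where $C$ depends only on the $L^1$-norms of ${f_\#}_\nu$ and $\bs f_\nu$ (uniformly bounded thanks to \eqref{3.8}). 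By Banach--Alaoglu I extract a weak* cluster point $\lambda\geq 0$ in $L^\infty(\R^N)'$; since $\|D^\sigma u_\nu\|_{\bs L^\infty(\R^N)}\leq g^*$, the products $\Lambda_\nu=\lambda_\nu D^\sigma u_\nu$ are bounded in $\bs L^\infty(\R^N)'$ and I pass to a further subnet converging weak* to some $\Lambda$.

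Passing to the limit in \eqref{3.11} for $(u_\nu,\lambda_\nu)$ yields equation \eqref{3.25} for $(u,\Lambda)$. The heart of the matter is then to identify $\Lambda=\lambda D^\sigma u$, which I do exactly as in the second part of the proof of Theorem \ref{t3.3}: using the constraint equation \eqref{3.22extra} for $\lambda_\nu$ with $\varphi=1$, i.e. $\langle\lambda_\nu,g^2\rangle=\langle\lambda_\nu,|D^\sigma u_\nu|^2\rangle$, together with $|D^\sigma u|\leq g$, I reproduce the chain \eqref{3.27} to obtain
\begin{equation*}
\langle\lambda_\nu,|D^\sigma(u_\nu-u)|^2\rangle\underset{\nu\to 0}{\longrightarrow}0,
\end{equation*}
where the last step uses the strong convergence $u_\nu\to u$ in $\hsigma$ to push the right-hand side to zero. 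The Hölder inequality \eqref{int6} for charges then gives, for every $w\in\Upsilon^\sigma_\infty(\Omega)$,
\begin{equation*}
\bigl|\bsl\lambda_\nu D^\sigma(u_\nu-u),D^\sigma w\bsr\bigr|\leq \langle\lambda_\nu,|D^\sigma(u_\nu-u)|^2\rangle^{1/2}\,\langle\lambda_\nu,|D^\sigma w|^2\rangle^{1/2}\underset{\nu\to 0}{\longrightarrow}0,
\end{equation*}
so that $\bsl\Lambda,D^\sigma w\bsr=\lim_\nu\langle\lambda_\nu,D^\sigma u\cdot D^\sigma w\rangle=\bsl\lambda D^\sigma u,D^\sigma w\bsr$. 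Hence $(u,\lambda)$ satisfies \eqref{3.11}.

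Finally, the complementarity condition $\lambda(|D^\sigma u|-g)=0$ is recovered from $\bsl\Lambda,D^\sigma u\bsr=\langle\lambda,|D^\sigma u|^2\rangle$ and passage to the limit $\langle\lambda,g^2\rangle=\lim_\nu\langle\lambda_\nu,g^2\rangle=\lim_\nu\bsl\Lambda_\nu,D^\sigma u_\nu\bsr=\bsl\Lambda,D^\sigma u\bsr$, exactly as in the concluding lines of Theorem \ref{t3.3}, followed by the Hölder-for-charges trick applied to $\langle\lambda(g^2-|D^\sigma u|^2),1\rangle=0$. The only delicate step, and the one to be careful with, is the identification of $\Lambda$ as $\lambda D^\sigma u$ from a merely weak* convergent $\lambda_\nu$: it is essential that $u_\nu\to u$ strongly in $\hsigma$ (and not merely weakly), because the vanishing of $\langle\lambda_\nu,|D^\sigma(u_\nu-u)|^2\rangle$ is driven by the strong convergence of the $\sigma$-gradients via equation \eqref{3.11} tested with $u_\nu-u$, and without it the charges Hölder inequality would not close the argument.
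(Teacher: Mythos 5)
Your proposal is correct and follows exactly the route the paper intends: the paper gives no separate proof of this corollary, stating only that ``the second part of this proof [of Theorem \ref{t3.3}] actually shows'' it, and your argument is precisely a repackaging of that part (ii) --- the Cauchy argument via \eqref{3.6}, the uniform bound \eqref{3.24extra} on $\lambda_\nu$, the identification $\Lambda=\lambda D^\sigma u$ through the vanishing of $\langle\lambda_\nu,|D^\sigma(u_\nu-u)|^2\rangle$ and the H\"older inequality for charges, and the recovery of the complementarity condition. Your closing remark correctly isolates the one genuinely delicate point, namely that the strong convergence $u_\nu\to u$ in $\hsigma$ is what makes the charges H\"older argument close.
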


\section{Convergence to the local problem as $\sigma\nearrow1$}\label{section4}

It is easy to check that all the theorems of the preceding two sections hold in the limit case $\sigma=1$, when $D^\sigma=D$ is the classical gradient and the data $f_\#$ and $\bs f$ satisfy \eqref{2.3} (with $f_\#\in L^\frac{2N}{N+2}(\Omega)$, if $N>2$, $f_\#\in L^q(\Omega)$, $\forall q<\infty$ if $N=2$ and $q=\infty$ if $N=1$) or \eqref{3.5}, and $g$ satisfies \eqref{2.8}, \eqref{2.18} or \eqref{3.1}, respectively.

In this section we show a continuous dependence of the solution $u^\sigma$ and of the Lagrange multiplier $\lambda^\sigma$ when $\sigma\nearrow1$. For the sake of simplicity, we take $f_\#=0$ and $\bs f\in \bs L^1 (\R^N)$, so that the limit variational inequality reads
\begin{equation}\label{4.1}
u \in\K_g =\big\{v\in H^1_0(\Omega):|Dv|\le g\text{ a.e. in }\Omega\big\},
\end{equation}
\begin{equation}\label{4.2}
\int_\Omega ADu \cdot D(v-u )\ge\int_\Omega \bs f\cdot D(v-u ),\quad\forall v\in\K _g.
\end{equation}

Likewise, observing that setting $\sigma=1$ in \eqref{3.2} we have $\Upsilon_\infty (\Omega)=W^{1,\infty}_0(\Omega)$, we can write the limit Lagrange multiplier problem in the form: find $(u ,\lambda )\in W^{1,\infty}_0(\Omega)\times L^\infty(\Omega)'$
\begin{equation}\label{4.3}
\int_\Omega ADu \cdot Dw+\bsl\lambda Du ,Dw\bsr=\int_\Omega \bs f\cdot Dw,\quad\forall w\in W^{1,\infty}_0(\Omega),
\end{equation}
\begin{equation}\label{4.4}
|Du |\le g\quad\text{ a.e. in }\Omega,\quad \lambda \ge0\ \text{ and }\ \lambda (|Du |-g)=0\quad\text{ in }\ L^\infty(\Omega)'.
\end{equation}

In \eqref{4.3} we denote the duality in $\bs L^\infty(\Omega)$ similarly to \eqref{3.10}, as we can always consider the solution and the test functions extended by zero in $\R^N\setminus\Omega$, since $\partial\Omega$ is $\C^{0,1}$.

We first recall an important consequence of the fact that the Riesz kernel is an approximation of the identity, as remarked by Kurokawa in \cite{Kurokawa1981}.

\begin{proposition}   If $h\in L^p(\R^N)\cap\C(\R^N)$, for some $p\ge 1$, is bounded and uniformly continuous in $\R^N$ then
\begin{equation*}
\lim_{\alpha\rightarrow0}\|I_\alpha*h-h\|_{L^\infty(\R^N)}=0.
	\end{equation*}
	
As a consequence, we have
\begin{equation}\label{4.6}
D^\sigma w\underset{\sigma\nearrow1}{\longrightarrow}Dw\quad\text{ in }\bs L^\infty(\R^N),\ \text{ for all }w\in\C^1_c(\R^N).
\end{equation}
\end{proposition}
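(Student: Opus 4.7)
The first assertion is the Kurokawa approximation-to-the-identity property of the Riesz kernels cited from \cite{Kurokawa1981}, which I would invoke directly. A self-contained proof sketch splits
\[
I_\alpha * h(x) - h(x) = \gamma_{N,\alpha}\int_{\R^N}\frac{h(x-y)-h(x)}{|y|^{N-\alpha}}\,dy
\]
into a near-zero part on $\{|y|\le r\}$, controlled by the modulus of uniform continuity of $h$ together with $\gamma_{N,\alpha}\int_{|y|\le r}|y|^{-(N-\alpha)}\,dy$, which stays bounded as $\alpha\to 0$; and a tail part on $\{|y|>r\}$, handled by H\"older's inequality against the function $|y|^{-(N-\alpha)}\mathbf{1}_{\{|y|>r\}}\in L^{p'}(\R^N)$ (valid for $\alpha$ small enough), whose contribution vanishes because $\gamma_{N,\alpha}\to 0$ as $\alpha\to 0$: the pole of $\Gamma(\alpha/2)$ at the origin forces $\gamma_{N,\alpha}\sim c_N\alpha$, which compensates the non-integrability of the Riesz kernel at infinity.

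To derive \eqref{4.6}, fix $w\in \C^1_c(\R^N)$ and note that each component $D_j w$ is continuous with compact support, hence bounded, uniformly continuous on $\R^N$, and in every $L^p(\R^N)$, $p\ge 1$. The commutation identity \eqref{int1}, stated in \cite{ShiehSpector2015} for $\C^\infty_c$-functions, extends to $\C^1_c$-functions by a standard mollification argument: approximating $w$ by $w_k\in\C^\infty_c(\R^N)$ supported in a common compact set with $w_k\to w$ and $Dw_k\to Dw$ uniformly on $\R^N$, one passes pointwise to the limit in $D(I_{1-\sigma}*w_k)=I_{1-\sigma}*Dw_k$ to obtain
\[
D^\sigma w = I_{1-\sigma}*Dw,\qquad w\in\C^1_c(\R^N).
\]
Applying the first part of the proposition componentwise to each scalar $h=D_j w$ with $\alpha=1-\sigma\searrow 0$ then yields
\[
\|D^\sigma w - Dw\|_{\bs L^\infty(\R^N)} = \|I_{1-\sigma}*Dw - Dw\|_{\bs L^\infty(\R^N)}\underset{\sigma\nearrow 1}{\longrightarrow} 0,
\]
which is exactly \eqref{4.6}.

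The only mildly delicate point is the extension of \eqref{int1} from $\C^\infty_c(\R^N)$ to $\C^1_c(\R^N)$: one must check that both sides are continuous and coincide pointwise, which follows from the continuity and compact support of $Dw$ together with the local integrability of the Riesz kernel $I_{1-\sigma}$ for $\sigma<1$. Once this is established, the conclusion is a direct componentwise application of Kurokawa's lemma, with no further analytic input required.
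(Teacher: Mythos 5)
Your main line of argument---citing Kurokawa's approximation-of-the-identity result for the first assertion, and then extending the Shieh--Spector identity $D^\sigma w=I_{1-\sigma}*Dw$ from $\C^\infty_c(\R^N)$ to $\C^1_c(\R^N)$ so that \eqref{4.6} follows componentwise from the first part applied to $h=D_jw$---is exactly the route the paper takes, and that part is fine.

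The ``self-contained sketch'' of Kurokawa's lemma, however, starts from a false identity. Writing
\begin{equation*}
I_\alpha*h(x)-h(x)=\gamma_{N,\alpha}\int_{\R^N}\frac{h(x-y)-h(x)}{|y|^{N-\alpha}}\,dy
\end{equation*}
presupposes that the Riesz kernel has total mass $1$, but $\int_{\R^N}I_\alpha(y)\,dy=+\infty$ for every $\alpha\in(0,1)$: the kernel fails to be integrable at infinity, so the right-hand side, split into its two natural pieces, contains the divergent term $-h(x)\,\gamma_{N,\alpha}\int_{\R^N}|y|^{-(N-\alpha)}dy$ whenever $h(x)\neq0$. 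For the same reason your tail estimate breaks down: H\"older can be applied to $\int_{|y|>r}I_\alpha(y)\,h(x-y)\,dy$ (this is where $h\in L^p$ and $\gamma_{N,\alpha}\to0$ enter), but not to the integrand $h(x-y)-h(x)$, whose constant part pairs with a non-integrable kernel. The correct decomposition is
\begin{equation*}
I_\alpha*h(x)-h(x)=\int_{|y|\le r}I_\alpha(y)\bigl(h(x-y)-h(x)\bigr)dy+\int_{|y|>r}I_\alpha(y)\,h(x-y)\,dy+h(x)\Bigl(\int_{|y|\le r}I_\alpha(y)\,dy-1\Bigr),
\end{equation*}
and the missing analytic ingredient is that $\int_{|y|\le r}I_\alpha(y)\,dy=\gamma_{N,\alpha}\,\sigma_{N-1}\,r^\alpha/\alpha\to1$ as $\alpha\to0$ for each fixed $r>0$ (not merely ``stays bounded''), precisely because $\gamma_{N,\alpha}\sim c_N\alpha$; this is the sense in which the mass of $I_\alpha$ concentrates at the origin despite being infinite globally. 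With that correction the three terms are controlled, uniformly in $x$, by the modulus of uniform continuity of $h$, by $\gamma_{N,\alpha}\|h\|_{L^p}\bigl\||y|^{-(N-\alpha)}\bigr\|_{L^{p'}(|y|>r)}$, and by $\|h\|_{L^\infty}$ times a quantity tending to $0$, and the sketch closes. The remainder of your proof, including the mollification argument for the extension of \eqref{int1} to $\C^1_c(\R^N)$, is correct and matches the paper.
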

\begin{proof} In Proposition 2.10 of \cite{Kurokawa1981} it is proved that $I_\alpha*h(x)\underset{\alpha\rightarrow0}{\longrightarrow}h(x)$ at each point of continuity of any function $h\in L^p(\R^N)$, $1\le p<\infty$, and it is not difficult to check that this convergence is uniform in $x\in\R^N$ for bounded and uniformly continuous functions (see \cite{ARS2021}). Then \eqref{4.6} is an immediate consequence of Theorem 1.2 of \cite{ShiehSpector2015}, which established that $D^sw=I_{1-s}*Dw$ for all $w\in\C^\infty_c(\R^N$), being the proof equally valid for functions only in $\C^1_c(\R^N)$.
\end{proof}

\begin{remark} The convergence \eqref{4.6}, as well as in $\bs L^p(\R^N)$ for $p\ge 1$, has been shown in Proposition 4.4 of \cite{ComiStefani2019} for functions of $\C^2_c(\R^N)$. By density of $\C^\infty_c(\R^N)$ in $L^p(\R^N)$ for $p\ge 1$, in \cite{BellidoCuetoMora2021} it was shown that the convergence $D^\sigma h\underset{\sigma\nearrow1}{\longrightarrow}Dh$ holds in $L^p(\R^N)$, for $1<p<\infty$, if $h\in W^{1,p}(\R^N)$.
\end{remark}

For $\chi\in L^\infty(\R^N)'$, we denote its restriction to $\Omega	\subset\R^N$ by $\chi_\Omega\in L^\infty(\Omega)'$, defined by
$$\langle\chi_\Omega,\varphi\rangle=\langle\chi,\widetilde\varphi\rangle,\quad\forall\varphi\in L^\infty(\Omega),$$
where $\widetilde\varphi$ is the extension of $\varphi$ by zero to $\R^N\setminus\Omega$,

\begin{theorem}\label{thm4.3} Let $\bs f\in\bs L^1(\R^N)$ ($f_\#=0$) and let $g$ be given as in \eqref{3.1}.
Then, if $(u^\sigma, \lambda^\sigma)\in \Upsilon_\infty^\sigma(\Omega)\times L^\infty(\R^N)'$ are the solutions to \eqref{3.11}-\eqref{3.12}, we have, for a generalised subsequence, the convergences, for any $s$, $0<s<\sigma<1$:
\begin{equation}\label{cov_sigma}
u^\sigma\underset{\sigma\nearrow1}{\longrightarrow}u\quad\text{ in } H^s_0(\Omega)\quad\text{ and }\quad \lambda_{\Omega}^\sigma\underset{\sigma\nearrow1}{\lraup}\lambda\quad\text { in }L^\infty(\Omega)'\text{-weakly}^*,
\end{equation}
where $(u,\lambda)\in W^{1,\infty}_0(\Omega)\times L^\infty(\Omega)'$ is a solution to \eqref{4.3}-\eqref{4.4} and $u$ is the unique solution to \eqref{4.1}-\eqref{4.2}.
\end{theorem}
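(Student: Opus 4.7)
The plan is to follow the charge-based strategy of Theorem~\ref{t3.3}(ii): derive uniform \emph{a priori} bounds on $(u^\sigma,\lambda^\sigma)$, extract subsequential limits, and pass to the limit in \eqref{3.11}--\eqref{3.12} using the test-function convergence $D^\sigma w\to Dw$ of \eqref{4.6} together with the H\"older inequality \eqref{int6} for charges; uniqueness of the local variational inequality \eqref{4.1}--\eqref{4.2} via Stampacchia will then force convergence of the full net.

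First I derive uniform estimates. The constraint gives $\|D^\sigma u^\sigma\|_{\bs L^\infty(\R^N)}\le g^*$, and testing \eqref{3.11} with $w=u^\sigma$ using the complementarity \eqref{3.12} yields
\begin{equation*}
a_*\|D^\sigma u^\sigma\|_{\bs L^2(\R^N)}^2+\langle\lambda^\sigma,g^2\rangle\le g^*\|\bs f\|_{\bs L^1(\R^N)},
\end{equation*}
which, combined with $g\ge g_*>0$ and arguing as in \eqref{3.24extra}, bounds $\|\lambda^\sigma\|_{L^\infty(\R^N)'}$ and $\|\lambda^\sigma D^\sigma u^\sigma\|_{\bs L^\infty(\R^N)'}$ uniformly in $\sigma$. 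For any fixed $s\in(0,1)$, the compact embedding $\hsigma\hookrightarrow H^s_0(\Omega)$ (for $s<\sigma<1$) applied to the uniform $\hsigma$-bound produces a generalised subsequence with $u^\sigma\to u$ in $H^s_0(\Omega)$, and \eqref{3.3} places $u$ in $W^{1,\infty}_0(\Omega)$. Integration by parts against $\bs\phi\in\C^\infty_c(\R^N)^N$ together with the Riesz-kernel convergence of the preceding Proposition identifies the weak $\bs L^2$-limit of $D^\sigma u^\sigma$ as $Du$, and pairing with vector fields of norm $\le 1$ transfers $|D^\sigma u^\sigma|\le g$ into $|Du|\le g$ a.e., so $u\in\K_g$. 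Banach--Alaoglu provides $\lambda_\Omega^\sigma\rightharpoonup^*\lambda\ge 0$ in $L^\infty(\Omega)'$ and $\lambda^\sigma D^\sigma u^\sigma\rightharpoonup^*\Lambda$ in $\bs L^\infty(\R^N)'$.

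Next I pass to the limit in \eqref{3.11} with $w\in\C^1_c(\Omega)$: \eqref{4.6} makes the linear and source terms converge by weak/strong pairing, and for the charge term I use the decomposition
\begin{equation*}
\bsl\lambda^\sigma D^\sigma u^\sigma,D^\sigma w\bsr=\langle\lambda^\sigma,(D^\sigma u^\sigma-Du)\cdot D^\sigma w\rangle+\langle\lambda^\sigma,Du\cdot(D^\sigma w-Dw)\rangle+\langle\lambda_\Omega^\sigma,Du\cdot Dw\rangle,
\end{equation*}
in which the last term tends to $\bsl\lambda Du,Dw\bsr$, the middle vanishes by \eqref{4.6}, and the first is controlled via \eqref{int6} by $\langle\lambda^\sigma,|D^\sigma u^\sigma-Du|^2\rangle^{1/2}\langle\lambda^\sigma,|D^\sigma w|^2\rangle^{1/2}$. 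Once the convergence $\langle\lambda^\sigma,|D^\sigma u^\sigma-Du|^2\rangle\to 0$ is established, this yields \eqref{4.3} for $w\in\C^1_c(\Omega)$ and, by density, for all $w\in W^{1,\infty}_0(\Omega)$. The complementarity \eqref{4.4} then follows from the identity $\langle\lambda,g^2\rangle=\bsl\Lambda,Du\bsr=\langle\lambda,|Du|^2\rangle$ via the H\"older argument at the very end of the proof of Theorem~\ref{t3.3}, and Stampacchia uniqueness for \eqref{4.1}--\eqref{4.2} identifies $u$, forcing convergence of the whole net.

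The main obstacle is the key convergence $\langle\lambda^\sigma,|D^\sigma u^\sigma-Du|^2\rangle\to 0$, which I establish in analogy with \eqref{3.27} by expanding the square: the diagonal terms satisfy $\langle\lambda^\sigma,|D^\sigma u^\sigma|^2\rangle=\langle\lambda^\sigma,g^2\rangle$ (complementarity) and $\langle\lambda^\sigma,|Du|^2\rangle\le\langle\lambda^\sigma,g^2\rangle$ (from $|Du|\le g$), while the cross term $\langle\lambda^\sigma,D^\sigma u^\sigma\cdot Du\rangle$ is handled by testing \eqref{3.11} with $w=u^\sigma-u$ (admissible since $u\in W^{1,\infty}_0(\Omega)\subset\Upsilon_\infty^\sigma(\Omega)$). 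The new difficulty compared to Theorem~\ref{t3.3}(ii) is that the operator $D^\sigma$ itself varies with the parameter: to process this cross term one needs the auxiliary strong convergence $D^\sigma u\to Du$ in $\bs L^2(\R^N)$ (the $p=2$ case of the Bourgain--Brezis--Mironescu-type result mentioned in the remark following the preceding Proposition), which must be orchestrated together with the $H^s_0(\Omega)$-compactness of $\{u^\sigma\}$ and the weak$^*$ convergence of the charges in $L^\infty(\R^N)'$.
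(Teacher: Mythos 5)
Your overall architecture---uniform bounds, compactness, identification of the weak $\bs L^2$-limit of $D^\sigma u^\sigma$ as $D\widetilde u$ via integration by parts and \eqref{4.6}, and complementarity via the H\"older inequality for charges---coincides with the paper's. The genuine gap is in your treatment of the cross term of the key convergence $\langle\lambda^\sigma,|D^\sigma u^\sigma-D\widetilde u|^2\rangle\to0$. Testing \eqref{3.11} with $w=u^\sigma-u$ produces $\langle\lambda^\sigma,D^\sigma u^\sigma\cdot D^\sigma u\rangle$, so to reach the cross term you actually need you must control $\langle\lambda^\sigma,D^\sigma u^\sigma\cdot(D^\sigma u-D\widetilde u)\rangle$. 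The $\bs L^2(\R^N)$-convergence $D^\sigma u\to D\widetilde u$ that you invoke is useless for this: $\lambda^\sigma$ is only a charge in $L^\infty(\R^N)'$, so the only available estimates are $|\langle\lambda^\sigma,\varphi\rangle|\le\|\lambda^\sigma\|_{L^\infty(\R^N)'}\|\varphi\|_{L^\infty(\R^N)}$ or, via \eqref{int6}, bounds that still require $\|D^\sigma u-D\widetilde u\|_{\bs L^\infty(\R^N)}$ to be small. For $u\in W^{1,\infty}_0(\Omega)$ the field $D\widetilde u$ is merely bounded, not uniformly continuous, so the uniform convergence of the Proposition preceding the theorem is unavailable, and a net of charges can concentrate precisely where $D^\sigma u-D\widetilde u$ remains of order one even though its $L^2$-norm vanishes. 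This step therefore fails as written, and with it the limit passage in your three-term decomposition of $\bsl\lambda^\sigma D^\sigma u^\sigma,D^\sigma w\bsr$.

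The paper circumvents exactly this obstruction by never pairing $\lambda^\sigma$ against a $\sigma$-dependent field other than $D^\sigma u^\sigma$ itself: the cross term in \eqref{4.16} is written as $\bsl\Lambda^\sigma,D\widetilde u\bsr$, which converges to $\bsl\widetilde\Lambda,D\widetilde u\bsr$ by weak$^*$ convergence of $\Lambda^\sigma$ against the \emph{fixed} function $D\widetilde u\in\bs L^\infty(\R^N)$; the identity $\bsl\widetilde\Lambda,D\widetilde u\bsr=\langle\lambda,|Du|^2\rangle$ is then obtained by a two-sided inequality, namely \eqref{4.15} (using $\langle\lambda^\sigma,g^2\rangle=\langle\lambda^\sigma,|D^\sigma u^\sigma|^2\rangle=\int_{\R^N}(\bs f-AD^\sigma u^\sigma)\cdot D^\sigma u^\sigma$ and the weak lower semicontinuity of $\int_{\R^N}AD^\sigma u^\sigma\cdot D^\sigma u^\sigma$) together with the expansion \eqref{4.16}; only then does \eqref{4.16extra} follow. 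This also dictates the paper's ordering: first pass to the limit in \eqref{3.11} with fixed $w\in\C^\infty_c(\Omega)$ to obtain \eqref{4.13} with the abstract limit $\Lambda$, and only afterwards identify $\Lambda=\lambda Du$; your decomposition instead requires the key convergence \emph{before} the limit equation, and you cannot obtain it independently. Your remaining steps (a priori bounds, $u\in\K_g$, density to reach $w\in W^{1,\infty}_0(\Omega)$, the final complementarity argument, and uniqueness of the limit) are correct and match the paper.
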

\begin{proof} Setting $v=0$ in \eqref{2.9}, or $w=u^\sigma$ in \eqref{3.11}, we immediatly obtain
\begin{equation} \label{4.8}\|u^\sigma\|_{H^\sigma_0(\Omega)}=\|D^\sigma u^\sigma\|_{\bs L^2(\R^N)}\le\Big(\frac{g^*}{a_*}\|\bs f\|_{L^1(\R^N)}\Big)^\frac12\equiv C_1,
\end{equation}
where $C_1$ is independent of $\sigma$, $0<\sigma<1$. Hence, arguing as in \eqref{3.24extra}, using \eqref{3.11}-\eqref{3.12}, it also follows easily
\begin{equation}\label{4.9}
\|\lambda^\sigma\|_{L^\infty(\R^N)'}=\sup_{\begin{array}{c}
\varphi\in L^\infty(\R^N)\\
\|\varphi\|_{L^\infty(\R^N)}=1
\end{array}}\langle\lambda^\sigma,\varphi\rangle\le\frac{\|\bs f\|_{\bs L^1(\R^N)}}{g_*^2}.
\end{equation}
Then, using $\Lambda^\sigma=\lambda^\sigma D^\sigma u^\sigma$ and recalling $\|D^\sigma u^\sigma\|_{\bs L^\infty(\R^N)}\le g^*$, from the estimates \eqref{4.8} and \eqref{4.9}, we may take a generalised subsequence $\sigma\nearrow1$ such that, by the compactness of $H^\sigma_0(\Omega)\hookrightarrow H^s_0(\Omega)$, $0<s<\sigma\le 1$,
\begin{align}\label{4.10}
&\begin{cases}u^\sigma\underset{\sigma\nearrow1}{\longrightarrow}u\quad\text{ in }H^s_0(\Omega),\\
D^\sigma u^\sigma
\underset{\sigma\nearrow1}{\lraup} \bs\chi\quad\text{ in }\bs L^2(\R^N)'\text{-weak and }\bs L^\infty(\R^N)'\text{-weak}^*,\end{cases}
\end{align}
\begin{equation}\label{4.11}
\lambda^\sigma\underset{\sigma\nearrow1}{\lraup}\widetilde\lambda\quad\text{ in }L^\infty(\R^N)'\text{-weak}^*,\qquad \Lambda^\sigma\underset{\sigma\nearrow1}{\lraup}\widetilde\Lambda\quad\text{ in }\bs L^\infty(\R^N)'\text{-weak} .
\end{equation}
	
Denoting by $\widetilde u^\sigma$ the extension of $u^\sigma$ by zero to $\R^N\setminus\Omega$, from \eqref{4.10} we conclude that $\bs\chi=D\widetilde u $  and in fact $u\in H^1_0(\Omega)$, and then $D\widetilde u=\widetilde{Du}$. Indeed, recalling the convergence \eqref{4.6}, we have
\begin{equation*}\int_{\R^N}\bs\chi\cdot\bs\varphi=\lim_{\sigma\nearrow1}\int_{\R^N}D^\sigma u^\sigma\cdot\bs\varphi=-\lim_{\sigma\nearrow1}\int_{\R^N}\widetilde u^\sigma(D^\sigma\cdot\bs\varphi)
=-\int_{\R^N}\widetilde u(D\cdot\bs\varphi)=\int_{\R^N}D\widetilde u\cdot\bs\varphi,
\end{equation*}
with an arbitrary $\bs\varphi\in\bs \C^\infty_c(\R^N)$.

On the other hand, given any measurable set $\omega\subset\Omega$, we have now
$$\int_\omega|Du|^2\le\varliminf_{\sigma\nearrow1}\int_\omega|D^\sigma u^\sigma|^2\le\int_\omega g^2$$
and therefore $|Du|\le g$ a.e. in $\Omega$, which yields $u\in\K_g \subset W^{1,\infty}_0(\Omega)$.

Passing to the limit $\sigma\nearrow1$ in \eqref{3.11}, first with  $w\in\C^\infty_c(\Omega)$
$$\int_{\R^N}AD^\sigma u^\sigma\cdot D^\sigma w+\bsl\Lambda^\sigma,D^\sigma w\bsr=\int_{\R^N}\bs f\cdot D^\sigma w$$
and using \eqref{4.6}, \eqref{4.10} and \eqref{4.11}, since $\bs\chi=\widetilde{Du}$ and $D\widetilde w=\widetilde{Dw}$, we obtain
\begin{equation}\label{4.13}
\int_\Omega ADu\cdot Dw+\bsl\Lambda,Dw\bsr=\int_{\Omega}\bs f\cdot Dw,
\end{equation}
by setting $\Lambda=\widetilde\Lambda_{\Omega}$ and $\bsl\Lambda,Dw\bsr=\bsl\widetilde\Lambda,D\widetilde w\bsr$.

Noting that for each $w\in W^{1,\infty}_0(\Omega)$ we may choose $w_\nu\in \C^\infty_c(\Omega)$ such that $w_\nu\underset{\nu\rightarrow\infty}{\longrightarrow}w$ in $H^1_0(\Omega)$ and $Dw_\nu\underset{\nu\rightarrow\infty}{\lraup}Dw$ in $\bs L^\infty(\Omega)$-weak$^*$, in \eqref{4.13} and we may pass to the generalised limit $\nu\rightarrow\infty$, concluding that \eqref{4.13} also holds for all $w\in W^{1,\infty}_0(\Omega)$. So, in order to see that $u$ and $\lambda=\widetilde\lambda_{|_{\Omega}}$, i.e. the restriction to $\Omega$ of the limit charge $\widetilde\lambda$ in \eqref{4.11}, solve \eqref{4.3}, we need to show that
\begin{equation}\label{4.14}
\bsl\Lambda,D w\bsr=\bsl\lambda D u,Dw\bsr=\langle\lambda, Du\cdot Dw\rangle,\qquad\forall w\in W^{1,\infty}_0(\Omega).
\end{equation}

We show first \eqref{4.14} for $w=u$, i.e. $\bsl\Lambda,Du\bsr=\langle\lambda, |Du|^2\rangle$, in two steps. 

Observing that $\widetilde\lambda\ge0$ and $|Du|\le g$, we have $\langle\lambda,|Du|^2	\rangle\le \bsl\Lambda,Du\bsr$ from
\begin{align}\label{4.15}
\nonumber\langle\lambda,|Du|^2\rangle\le\langle\widetilde\lambda,g^2\rangle&=\lim_{\sigma\nearrow1}\langle \lambda^\sigma,g^2\rangle=\lim_{\sigma\nearrow1}\langle \lambda^\sigma,|D^\sigma u^\sigma|^2\rangle\\
&=\lim_{\sigma\nearrow1}\bsl \lambda^\sigma D^\sigma u^\sigma,D^\sigma u^\sigma\bsr\\
\nonumber &=\varlimsup_{\sigma\nearrow1}\int_{\R^N}(\bs f-AD^\sigma u^\sigma)\cdot D^\sigma u^\sigma\\
\nonumber&\le\int_{\R^N}(\bs f-AD\widetilde u)\cdot D\widetilde u=\bsl\widetilde\Lambda,D\widetilde u\bsr=\bsl\Lambda,Du\bsr.
\end{align}

Note that $D^\sigma u^\sigma\underset{\sigma\nearrow1}{\lraup}D\widetilde u$ in $\bs L^2(\R^N)$-weak and hence
$$\varliminf_{\sigma\nearrow1}\int_{\R^N}AD^\sigma u^\sigma\cdot D^\sigma u^\sigma\ge\int_{\R^N} AD\widetilde u\cdot D\widetilde u=\int_\Omega ADu\cdot Du.$$

On the other hand, we find $\bsl\Lambda,Du\bsr\le\langle\lambda,|Du|^2\rangle$ by noting that $\Lambda^\sigma\!=\!\lambda^\sigma D^\sigma u^\sigma$ and, similarly,
\begin{equation}\label{4.16}
0\le\langle\lambda^\sigma,|D^\sigma u^\sigma-D\widetilde u|^2\rangle=\bsl\lambda^\sigma D^\sigma u^\sigma,D^\sigma u^\sigma\bsr-2\bsl\Lambda^\sigma,D\widetilde u\bsr+\langle\lambda^\sigma,|D\widetilde u|^2\rangle
\end{equation}
yields
\begin{align*}
2\bsl\widetilde\Lambda,D\widetilde u\bsr&=2\lim_{\sigma\nearrow1}\bsl\Lambda^\sigma,D\widetilde u\bsr\le\varlimsup_{\sigma\nearrow1}\int_{\R^N}(\bs f-AD^\sigma u^\sigma)\cdot D^\sigma u^\sigma+\lim_{\sigma\nearrow1}\langle\lambda^\sigma,|D\widetilde u|^2\rangle\\
&\le\int_{\R^N}(\bs f-AD\widetilde u)\cdot D\widetilde u+\langle\lambda,|Du|^2\rangle=\bsl\widetilde \Lambda,D\widetilde u\bsr+\langle\lambda,|Du|^2\rangle.
\end{align*}

As a consequence of $\bsl\Lambda,D u\bsr=\langle\lambda,|Du|^2\rangle$, from \eqref{4.16} we deduce
\begin{equation}\label{4.16extra}
\lim_{\sigma\nearrow1}\langle\lambda^\sigma,|D^\sigma u^\sigma -D\widetilde u|^2\rangle=0,
\end{equation}
which, by H\"older inequality yields for any $\bs\beta\in\bs L^\infty(\R^N)$
\begin{align*}
\left|\bsl\widetilde\Lambda-\widetilde\lambda D\widetilde u,\bs \beta\bsr\right|&=\lim_{\sigma\nearrow1}\left|\bsl\Lambda^\sigma-\lambda^\sigma D\widetilde u,\bs\beta\bsr\right|=\lim_{\sigma\nearrow1}\left|\bsl\lambda^\sigma(D^\sigma u^\sigma-D\widetilde u),\bs\beta\bsr\right|\\
&\le\lim_{\sigma\nearrow1}\langle\lambda^\sigma,|D^\sigma u^\sigma-D\widetilde u|\,|\bs\beta|\rangle\\
&\le\lim_{\sigma\nearrow1}\langle\lambda^\sigma,|D^\sigma u^\sigma-D\widetilde u|^2\rangle^\frac12\langle\lambda^\sigma,|\bs\beta|^2\rangle^\frac12=0,
\end{align*}
and, consequently, \eqref{4.14} follows from
\begin{equation*}
\Lambda=\lambda Du\quad\text{ in }\bs L^\infty(\Omega)'.
\end{equation*}

This equality in \eqref{4.14} with $g>0$ implies that
$$\langle\lambda,|Du|^2\rangle=\langle\widetilde \lambda,g^2\rangle\ge\langle\lambda,g_{|_{\Omega}}^2\rangle\ge\langle\lambda,|Du|^2\rangle,$$
and $\langle\lambda,|Du|^2-g^2\rangle=0$ (here $g=g_{|_{\Omega}}$). Then, exactly the same argument as at the end of the proof of Theorem \ref{3.4} shows that $\lambda$ and $u$ satisfy the third condition of \eqref{4.4}.

Finally, since we also have
$$\bsl\lambda Du,D(v-u)\bsr\le 0,\quad\forall v\in\K _g,$$
\eqref{4.3} implies \eqref{4.2} and this concludes the proof of the theorem. \end{proof}

\begin{remark}
In the Hilbertian case of $g\in L^2(\Omega), g\ge0$ and $\bs f\in\bs L^2(\R^N)$, it is easy to show
the convergence of the solutions $(u^\sigma,\gamma^\sigma)\in\Upsilon^\sigma_\infty(\Omega)\times H^\sigma_0(\Omega))$ given by Theorem \ref{2.1}, also in the case $f_\#=0$ to simplify, as $\sigma\nearrow1$ to the local problem for $(u,\gamma)\in W^{1,\infty}_0(\Omega)\times H^1_0(\Omega)$, satisfying \eqref{2.11} with $\sigma=1$ and
\begin{equation}\label{4.19}
\int_\Omega\big(ADu+D\sigma)\cdot Dv=\int_\Omega\bs f\cdot Dv,\qquad\forall v\in H^1_0(\Omega).
\end{equation}
Indeed, as in \eqref{2.10} and \eqref{2.13}, the a priori estimates
$$\|u^\sigma\|_{H^\sigma_0(\Omega)}\le\tfrac1{a_*}\|\bs f\|_{\bs L^2(\R^N)}\quad\text{ and }\quad\|\gamma^\sigma\|_{H^\sigma_0(\Omega)}\le\big(1+\tfrac{a^*}{a_*}\big)\|\bs f\|_{\bs L^2(\R^N)}$$
allows us to take sequences
$$u^\sigma\underset{\sigma\nearrow1}{\longrightarrow}u\quad\text{ and }\quad \gamma^\sigma\underset{\sigma\nearrow1}{\longrightarrow}\gamma\quad\text{ in }H^s_0(\Omega),\quad 0<s<1,$$
in \eqref{2.12} with $v\in H^1_0(\Omega)\subset H^\sigma_0(\Omega)$, in order to obtain \eqref{4.19} and, using \eqref{2.19}, the $\Gamma=\Gamma(u)\in H^{-\sigma}(\Omega)$ corresponding to $\gamma$ satisfies \eqref{2.11} with $\sigma=1$.
\end{remark}

\section*{Ackowledgment} 

\noindent The research of Jos\'e-Francisco Rodrigues was partially done under
the framework of the Project PTDC/MATPUR/28686/2017 at CMAFcIO/ULisboa.

\vspace{3mm}

\noindent The  research  of  Assis Azevedo and Lisa Santos   was  partially  financed  by  Portuguese  Funds 
through  FCT  (Funda\c c\~ao  para  a  Ci\^encia  e  a  Tecnologia)  within  the  Projects  UIDB/00013/2020  and 
UIDP/00013/2020 

\bibliographystyle{emsplain}

\end{document}